\numberwithin{equation}{section}
\theoremstyle{plain}
\numberwithin{equation}{section}
\newtheorem{theorem}[equation]{Theorem}
\newtheorem{corollary}[equation]{Corollary}
\newtheorem{proposition}[equation]{Proposition}
\newtheorem{lemma}[equation]{Lemma}
\theoremstyle{definition}
\newtheorem{definition}[equation]{Definition}
\newtheorem{example}[equation]{Example}
\theoremstyle{definition}
\newtheorem{remark}[equation]{Remark}
\newtheorem*{assumption}{Assumption}
\newcommand{\A}{\mathscr{A}}
\newcommand{\C}{\mathscr{C}}
\newcommand{\M}{{\mathfrak{m}}}
\newcommand{\N}{{\mathbb N}}
\newcommand\id{\operatorname{id}}
\newcommand\Ext{\operatorname{Ext}}
\newcommand\Homol{\operatorname{H}}
\newcommand\coh{\Homol}
\newcommand\cx{\operatorname{cx}}
\newcommand\ot{\otimes}
\newcommand\Hom{\operatorname{Hom}}
\newcommand\FPdim{\operatorname{FPdim}}
\newcommand\VC{V_{\C}}
\newcommand\unit{\mathbf{1}}
\DeclareMathOperator{\Ima}{Im}
\DeclareMathOperator{\Ker}{Ker}
\newcommand{\DOT}{\setlength{\unitlength}{1pt}\begin{picture}(2.5,2)(1,1)\put(2,3){\circle*{2}}\end{picture}}
\newcommand{\bu}{\DOT}
\newcommand{\Coh}{\operatorname{H}\nolimits}
\newcommand{\Ho}{\operatorname{\Coh^{\bu}}\nolimits}
\newcommand{\Maxspec}{\operatorname{MaxSpec}\nolimits}
\newcommand{\az}{\mathfrak{a}}
\newcommand{\m}{\mathfrak{m}}
\def\blx@maxline{77}
\begin{document}
\title[Support varieties for finite tensor categories]
{Support varieties for finite tensor categories:
Complexity, realization, and connectedness}

\author{Petter Andreas Bergh, Julia Yael Plavnik, Sarah Witherspoon}

\address{Petter Andreas Bergh \\ Institutt for matematiske fag \\
NTNU \\ N-7491 Trondheim \\ Norway} \email{petter.bergh@ntnu.no}
\address{Julia Yael Plavnik \\ Department of Mathematics \\ Indiana University \\ Bloomington \\ Indiana 47405 \\ USA}
\email{jplavnik@iu.edu}
\address{Sarah Witherspoon \\ Department of Mathematics \\ Texas A \& M University \\ College Station \\ Texas 77843 \\ USA}
\email{sjw@math.tamu.edu}
\thanks{The second author was partially supported by NSF grants 1802503 and 1917319.
The third author was partially supported by NSF grant 1665286.
}
\subjclass[2010]{16E40, 16T05, 18D10}
\keywords{finite tensor category; support varieties; projective objects; indecomposable object; nonsemisimple Hopf algebra}
\date{\today}

\begin{abstract}
We advance support variety theory for finite tensor categories. First we show that the dimension of the support variety of an object
equals the rate of growth of a minimal projective resolution as measured by the Frobenius-Perron dimension. Then we show that every conical subvariety of the support variety of the unit object may be realized as the support variety of an object. Finally, we show that the support variety of an indecomposable object is connected. 
\end{abstract}

\maketitle

\section{Introduction}

\sloppy Tensor categories arise in many important settings such as
representation theory, low dimensional topology, and quantum computing.
Nonsemisimple tensor categories range from categories of representations
of finite groups in positive characteristic
and representations of some finite dimensional Hopf algebras 
to categories discovered more recently such as 
those appearing in logarithmic conformal field theory~\cite{EO,Gaberdiel},
representations of dynamical quantum groups at roots of unity~\cite{NP}, 
and some new categories in characteristic two~\cite{BE}.
For nonsemisimple tensor categories satisfying some finiteness conditions, 
support varieties are meaningful geometric invariants of objects.  
Their theory began in work of Quillen~\cite{Quillen} and Carlson~\cite{Carlson1}
on finite group representations. 
In more recent years, the theory of support varieties was generalized
in many directions, for example to representations of 
Hopf algebras~\cite{BKN,Feldvoss-W,FriedlanderPevtsova,Ostrik,PlW1,Witherspoon}
and of finite dimensional self-injective algebras~\cite{EHSST,SS04} 
and to objects in triangulated categories~\cite{BKSS}.

In this paper, we advance support variety theory for 
finite tensor categories generally, with a view toward further applications. For every finite tensor category $\C$, the cohomology ring $\coh^*(\C)=\Ext^*_{\C}(\unit,\unit)$, where $\unit$ is the unit object, is a graded commutative ring. As a consequence, the support variety $V_{\C}(X)$ of an object 
$X$ -- defined in terms of the annihilator of $\Ext^*_{\C}(X,X)$ in the cohomology ring $\coh^*(\C)$ --
is a topological space in the Zariski topology. Many properties of support varieties hold in this full generality, without any further assumptions. However, in order to develop a robust support variety theory, it is necessary to require a finiteness condition
that is known to hold in many cases: 
Etingof and Ostrik~\cite{EO} conjectured 
that the cohomology ring $\coh^*(\C)$ is finitely generated, and that $\Ext^*_{\C}(X,X)$ is a finitely generated $\coh^*(\C)$-module for all objects $X \in \C$. When this holds, the support varieties encode homological properties of the objects. The main results in this paper are examples of this.

We make significant contributions in a few useful directions.
We define the complexity of an object $X$ as the rate of growth of 
a minimal projective 
resolution of $X$ as measured by the Frobenius-Perron dimensions
of its components. 
We show in Theorem~\ref{thm:cx-dim} that just as for finite dimensional
Hopf algebras~\cite{Feldvoss-W}, the complexity of $X$ is equal to the 
dimension of its support variety, $\dim V_{\C}(X)$.
We then recall a standard construction
of some special modules first defined for finite groups
by Carlson, and apply it to objects in tensor categories 
(cf.~the Koszul objects in~\cite{BKSS}), to define
objects $L_{\zeta}$ in Section~\ref{sec:carlson}.
We show that they satisfy a 
tensor product property, 
that is the support variety of a tensor product of an object with
any $L_{\zeta}$ 
is the intersection of their support varieties. 
We use them to show that any conical subvariety of the support variety of the
unit object may be realized as the support variety of some object. 
These objects $L_{\zeta}$ also play a key role in 
generalizing a result of Carlson~\cite{Carlson} from 
indecomposable modules for a finite group to indecomposable 
objects in a finite tensor category $\C$. 
Namely, we show in Theorem~\ref{thm:main} and Corollary~\ref{cor:connected} that  
the support variety of an indecomposable object is connected.
The proof requires Proposition~\ref{prop:reducing} that allows
us to reduce the complexity of an object for use in inductive arguments.
We also give a needed connection between the vanishing of Ext 
and dimensions of varieties in Proposition~\ref{prop:vanishing}.

As a word of caution, we observe that some standard properties of 
support varieties for finite groups do not always hold in this 
general setting of finite tensor categories.
For example, the varieties of an object and of its dual need not be the same,
and the variety of a tensor product of objects need not be
the intersection of their varieties.
See, e.g.,~\cite{BW,PlW1} for counterexamples. 
These counterexamples occur in categories that are not braided;
we do not know whether these statements always hold in a braided category.
They are however used in some proofs that the variety
of an indecomposable module is connected
(see, e.g.,~\cite{Benson2,EHSST}).
We take the alternative route as 
outlined above to demonstrate
connectedness of the variety of an indecomposable object
without relying on these properties.

The contents of this paper are as follows. 
In Section~\ref{sec:prelim}, we recall the definitions of finite
tensor categories, the Frobenius-Perron dimension,
projective covers and minimal resolutions, and state some needed lemmas.
In Section~\ref{supp-var}, we define support varieties for objects
of a finite tensor category $\C$, 
and conclude some standard properties.
We then state the finite generation condition on the cohomology of $\C$, and this condition will be assumed in most of the results in the rest of the paper. 
In Section~\ref{sec:complexity}, we define the complexity of an object and
show that it agrees with the dimension of the support variety.
As a consequence, we show that an object is projective
if and only if its support variety is zero-dimensional.
In Section~\ref{sec:carlson}, for each homogeneous positive
degree element $\zeta$ of the cohomology ring of the 
finite tensor category $\C$,
we define an object $L_{\zeta}$ whose variety is the zero set
of the ideal generated by $\zeta$.
We obtain, as a standard
consequence of the definition of $L_{\zeta}$, both a tensor product property
and a realization result: any conical subvariety of
the support variety of the unit object $\unit$ can be realized
as the support variety of some object.
Finally, in Section~\ref{sec:indecomposable}, we show that 
the variety of an indecomposable object is connected.


\subsection*{Acknowledgments}
We thank Dave Benson, Karin Erdmann, Henning Krause, Mart\'in Mombelli, Cris Negron, 
and Victor Ostrik for very helpful conversations that
led to improvements in this paper.

\section{Preliminaries}\label{sec:prelim}

In this section, we summarize some basic facts about projective covers
and resolutions in a finite tensor category $\C$, and recall the definition of the Frobenius-Perron dimension. For details, we refer the reader to \cite{BK, EGNO, EO, NP}. Throughout, we fix an algebraically closed field $k$ of arbitrary characteristic.

Recall that a {\em finite tensor category} $\C$ is a locally finite
$k$-linear abelian category with finitely
many simple objects (up to isomorphism)
and enough projectives
together with
a bifunctor $\ot: \C\times \C \rightarrow \C$ that is associative
(up to functorial isomorphisms), bilinear on morphisms, and
satisfies some associativity axioms.
In addition, there is a unit object $\unit$ in $\C$ (an identity with
respect to $\ot$ up to functorial isomorphism) that is simple, and
every object in $\C$ has both left and right duals, i.e.\ $\C$ is rigid. 
This requirement is important for us, even though we shall not be using dual objects directly. For example, by \cite[Remark 6.1.4]{EGNO}, it implies that the category is quasi-Frobenius, that is, the projective objects and the injective objects are the same. 
Note that since the underlying category $\C$ is locally finite, the Jordan-H{\"o}lder Theorem and the Krull-Schmidt Theorem hold; see \cite[Section 1.5]{EGNO}.
Recall that
$k$-linear means that the
morphism sets are $k$-vector spaces for which composition of morphisms
is $k$-bilinear.

From now on, $\C$ will be a finite tensor category. It follows that $\C$ is equivalent to the category of finite dimensional
modules over some finite dimensional $k$-algebra~\cite[p.~9]{EGNO}. 

Some additional properties of the tensor product $\ot$ ensured by rigidity are:
\begin{itemize}
\item[(i)] The tensor product $\ot$ is 
biexact~\cite[Proposition 4.2.1]{EGNO}.
\item[(ii)] Whenever $X$ is an object
and $P$ is a projective object of $\C$, the objects 
$P\ot X$ and $X\ot P$ are also projective~\cite[Proposition 4.2.12]{EGNO}.
\end{itemize} 
We will sometimes take $\C$ to be {\em braided},
meaning that there are functorial isomorphisms 
$X\ot Y\cong Y\ot X$ for all objects $X,Y$ in $\C$
that satisfy some hexagonal identities~\cite[Definition 8.1.1]{EGNO}.

\subsection*{Projective covers and stable isomorphisms}
Let $X$ be an object in the finite tensor category $\C$. 
A {\em projective cover} of $X$ is
a projective object $P(X)$ in $\C$ together with an epimorphism $p: P(X)
\rightarrow X$ such that if $f:P\rightarrow X$ is an epimorphism
from a projective object $P$ to $X$, then there is an epimorphism
$g:P\rightarrow P(X)$ for which $pg=f$~\cite[Definition~1.6.6]{EGNO}.
Projective covers exist and 
are unique up to nonunique isomorphism~\cite[p.~6]{EGNO}.

Let $X_1,\ldots, X_r$ be the simple
objects in $\C$ (one from each isomorphism class). 
We will use the following equation involving vector space
dimensions of morphism spaces:
for any object $Y$ in $\C$, denote by $[Y:X_i]$ the multiplicity
of the simple object 
$X_i$ in a Jordan-H\"older series of $Y$. By~\cite[Equation (1.7)]{EGNO},
\begin{equation*}\label{eqn:dim-Hom}
  [Y:X_i] =
       \dim_k \Hom_{\C} (P(X_i),Y) . \tag{$\dagger$}
\end{equation*}
We will need the following presentation of a projective object. 

\begin{lemma}\label{lem:proj}
Let $P$ be a projective object in $\C$.
Then
$
   P\cong \oplus _{i=1}^r a_i P(X_i)
$
for some nonnegative integers $a_i$,
where $P(X_i)$ is the projective cover of the simple object $X_i$
for each $i$.
Moreover, $a_i= \dim_k\Hom_{\C}(P,X_i)$.
\end{lemma}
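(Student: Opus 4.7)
The plan is to combine Krull--Schmidt with the universal property of projective covers, and then read off multiplicities via $(\dagger)$. Since $\C$ is locally finite, Krull--Schmidt applies, so I can write $P\cong \bigoplus_j Q_j$ as a finite direct sum of indecomposable projective objects. The key step is to show that each $Q_j$ is isomorphic to some $P(X_i)$. To do this, pick a maximal subobject of $Q_j$ (which exists because $Q_j$ has finite length by local finiteness and Jordan--H\"older), giving an epimorphism $Q_j \twoheadrightarrow X_i$ onto a simple quotient. By the defining universal property of the projective cover, this epimorphism factors as $Q_j \twoheadrightarrow P(X_i) \twoheadrightarrow X_i$ via some epimorphism $g \colon Q_j \twoheadrightarrow P(X_i)$. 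Because $P(X_i)$ is projective, $g$ splits, yielding $Q_j \cong P(X_i) \oplus K$; indecomposability of $Q_j$ forces $K=0$, so $Q_j \cong P(X_i)$.

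Collecting isomorphic summands, and using that $P(X_i)\not\cong P(X_j)$ for $i\ne j$ (they have non-isomorphic simple tops, as is visible from the description of $\C$ as modules over a finite dimensional $k$-algebra), I obtain the desired decomposition $P\cong \bigoplus_{i=1}^r a_i P(X_i)$ with well-defined nonnegative integers $a_i$. To pin down $a_i$, I apply $\Hom_\C(-,X_i)$ to this decomposition:
\[
\dim_k \Hom_\C(P,X_i) \;=\; \sum_{j=1}^r a_j \, \dim_k \Hom_\C(P(X_j),X_i).
\]
By $(\dagger)$, $\dim_k \Hom_\C(P(X_j),X_i)=[X_i:X_j]$, which equals $\delta_{ij}$ since $X_i$ is simple. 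Hence the sum collapses to $a_i$, as claimed.

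The only step that requires genuine thought is the identification of an arbitrary indecomposable projective with a projective cover of a simple object; this rests on the universal property of projective covers recalled in the preliminaries together with the fact that any epimorphism onto a projective object splits. Everything else -- the existence of the decomposition, its uniqueness, and the computation of multiplicities -- is standard bookkeeping via Krull--Schmidt and the already-recorded identity $(\dagger)$.
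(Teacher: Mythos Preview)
Your proof is correct, and the central technical step---mapping a projective onto a simple, lifting to an epimorphism onto the projective cover, and splitting---is exactly the one the paper uses. The organization differs: the paper proceeds by induction on the length of a Jordan--H\"older series of $P$, peeling off one summand $P(X_i)$ at a time and reducing to a projective of smaller length, thereby avoiding any appeal to Krull--Schmidt. You instead invoke Krull--Schmidt at the outset to reduce to the statement that every indecomposable projective is isomorphic to some $P(X_i)$, and then run the same splitting argument once. Your route is a little more conceptual (it cleanly identifies the indecomposable projectives), while the paper's inductive argument is more self-contained in that it does not rely on the Krull--Schmidt decomposition. The multiplicity computation via $(\dagger)$ is identical in both.
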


\begin{proof}
This follows from the existence of a category equivalence with the category of modules
over some finite dimensional algebra (see~\cite[pp.~9--10]{EGNO})
and standard facts about finite dimensional algebras.
However, we give a more direct proof in our setting, by induction on
the length $n$ of a Jordan-H\"older series for the 
projective object $P$.

If $n=1$, then $P$ is both simple and projective, and so 
$P = X_i= P(X_i)$ for some $i$.
Now assume that the first statement holds for all projective
objects of length less than $n$. 
For some $i$, the vector space $\Hom_{\C}(P,X_i)$ is nonzero. 
Choose a nonzero morphism
$f$ in $\Hom_{\C}(P,X_i)$.
Since $X_i$ is simple, $f$ is an epimorphism.
By definition of a projective cover $p: P(X_i)\rightarrow X_i$, 
there exists an epimorphism $g:P\rightarrow P(X_i)$ such that
$p g = f$. 
Since $g$ is an epimorphism and $P(X_i)$ is projective,
there is a splitting morphism $h:P(X_i)\rightarrow P$, that is $gh = \id_{P(X_i)}$,
the identity morphism on $P(X_i)$. 
It follows that $P(X_i)$ is a direct summand of $P$.
Write $P\cong P(X_i)\oplus Q$ for some projective object $Q$.
Then $Q$ has length less than $n$, and by the
induction hypothesis, it has a direct sum decomposition
as in the first statement of the theorem.
It follows that $P$ does as well. 

Now write $P\cong \oplus_{j=1}^r a_j P(X_j)$.
By equation~(\ref{eqn:dim-Hom}), for each $i$,
\[
  \dim_k \Hom_{\C} (P,X_i) = 
     \sum_{j=1}^r a_j \dim_k \Hom_{\C}(P(X_j), X_i) 
       = \sum_{j=1}^r a_j [X_i:X_j] = a_i .
\]
\end{proof}

The next result is Schanuel's Lemma for abelian categories; we include a proof for completeness. Let us call two objects $X,Y$ \emph{stably isomorphic} if there exist projective objects $P,Q$ such that $X \oplus P$ is isomorphic to $Y \oplus Q$.

\begin{lemma}[Schanuel's Lemma]\label{lem:Schanuel}
If 
\[
   0\rightarrow K\rightarrow P\rightarrow X \rightarrow 0 \  \ \ \mbox{ and }
   \ \ \ 0\rightarrow K'\rightarrow P'\rightarrow X\rightarrow 0
\]
are two short exact sequences 
in an abelian category $\A$ with
$P,P'$ projective, then $K$ and $K'$ are stably isomorphic. In fact, $K\oplus P'\cong K'\oplus P$.
\end{lemma}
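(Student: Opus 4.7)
The plan is to construct the fibered product (pullback) of the two epimorphisms and exploit projectivity to split the resulting short exact sequences. Write the two given morphisms as $p\colon P\to X$ and $p'\colon P'\to X$, and form the pullback $E$ of the pair $(p,p')$ in $\A$. Concretely, $E$ may be realized as the kernel of the difference map $P\oplus P'\to X$ sending $(\alpha,\beta)\mapsto p(\alpha)-p'(\beta)$, and it comes equipped with two projections $\pi\colon E\to P$ and $\pi'\colon E\to P'$ satisfying $p\,\pi=p'\pi'$.

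Next, I would verify the two key properties of this pullback square: that $\pi$ is an epimorphism with kernel canonically isomorphic to $K'=\ker p'$, and symmetrically that $\pi'$ is an epimorphism with kernel canonically isomorphic to $K=\ker p$. Both statements are standard facts valid in any abelian category (pullback of an epimorphism along an arbitrary morphism is again an epimorphism, and the kernels of the two parallel arrows in a pullback square agree with the kernels of the arrows they are pulled back from). This yields two short exact sequences
\[
0\to K'\to E\xrightarrow{\pi} P\to 0,\qquad 0\to K\to E\xrightarrow{\pi'} P'\to 0.
\]

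Finally, since $P$ and $P'$ are projective, both sequences split, giving isomorphisms $E\cong K'\oplus P$ and $E\cong K\oplus P'$. Combining these identifications produces the claimed isomorphism $K\oplus P'\cong K'\oplus P$, which in particular shows that $K$ and $K'$ are stably isomorphic.

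The only delicate point is the abstract verification of the pullback properties in a general abelian category, rather than in a category of modules where the argument is transparent via elements. However, these properties follow routinely from the universal property of the pullback together with the axioms of an abelian category (or by an application of the snake lemma to the defining sequence $0\to E\to P\oplus P'\to X\to 0$), so this is a standard rather than a substantive obstacle.
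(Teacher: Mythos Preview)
Your proposal is correct and follows essentially the same approach as the paper: both form the pullback of the two epimorphisms onto $X$, identify the kernels of its projections with $K$ and $K'$, observe that the projections are epimorphisms, and then split using projectivity of $P$ and $P'$. The only cosmetic difference is in the justification that the projections are epimorphisms---you invoke the standard fact that pullbacks preserve epimorphisms in an abelian category (or alternatively the snake lemma applied to $0\to E\to P\oplus P'\to X\to 0$), whereas the paper argues via the observation that the pullback square is also a pushout.
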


\begin{proof}
Consider the pullback of $P\stackrel{\phi}{\longrightarrow} X$
and $P'\stackrel{\psi}{\longrightarrow} X$, given by an object $W$ and morphisms
$W\stackrel{\alpha}{\longrightarrow} P$ and $W\stackrel{\beta}{\longrightarrow}P'$.
By definition, considering the zero morphisms from
$K$ to $X$ through $P$ and $P'$, there is a morphism from $K$ to $W$
that makes the corresponding diagram commute.
Similarly there is a morphism from $K'$ to $W$. 
By \cite[Theorem 6.2]{HS}, $K$ is the kernel of $\beta$
and $K'$ is the kernel of $\alpha$. Since 
$P\stackrel{\phi}{\longrightarrow} X$
and $P'\stackrel{\psi}{\longrightarrow} X$ 
are epimorphisms, the pullback diagram is
also a pushout diagram \cite[Exercise II.6.7]{HS} and
it follows that $\alpha$ and $\beta$ are epimorphisms.
To see this, note that if $Z$ is an object and $f,g: P\rightarrow Z$ are morphisms such that
$f\alpha = g\alpha$, then $(f-g)\alpha = 0$. Consider $f-g: P\rightarrow Z$
and the zero morphism $P'\rightarrow Z$. Since $X$ is a pushout,
there is a morphism $\nu: X\rightarrow Z$ such that $\nu\phi= f-g$.
If $f-g\neq 0$ then $\nu\neq 0$, but this contradicts the assumption
that $0 = \nu\psi$.  So $f=g$, implying that $\alpha$ is an epimorphism.
Similarly we see that $\beta$ is an epimorphism. 
Since $\alpha$ and $\beta$ are epimorphisms, and $P$ and $P'$
are projective, these morphisms split and we now have $W\cong K\oplus P'$
and $W\cong K'\oplus P$.
\end{proof}

The next lemma is a well-known characterization of split short exact sequences, but in the setting of quasi-Frobenius $k$-linear abelian categories. 

\begin{lemma}\label{lem:split}
For a short exact sequence
$$0 \to X \to Y \to Z \to 0$$
in a locally finite and quasi-Frobenius $k$-linear abelian category $\A$,
the following are equivalent:
\begin{itemize}
\item[(i)] The sequence splits;
\item[(ii)] $Y$ is isomorphic to $X \oplus Z$;
\item[(iii)] $Y$ is stably isomorphic to $X \oplus Z$.
\end{itemize}
\end{lemma}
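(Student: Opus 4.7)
Plan:

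The easy implications $(\text{i}) \Rightarrow (\text{ii}) \Rightarrow (\text{iii})$ are immediate: the first is the definition of a split short exact sequence, and the second follows by taking $P = Q = 0$. The content is therefore $(\text{iii}) \Rightarrow (\text{i})$, which I would prove via a two-stage reduction followed by a splitting argument based on Fitting's lemma.

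The first reduction uses that projectives are injectives in the quasi-Frobenius category $\A$. If $X = X_0 \oplus X_p$ with $X_p$ projective, injectivity of $X_p$ splits the composite $X_p \hookrightarrow X \hookrightarrow Y$, so the given sequence decomposes as the direct sum of the identity sequence on $X_p$ and a reduced sequence $0 \to X_0 \to Y/X_p \to Z \to 0$. A symmetric argument using projectivity of the maximal projective summand $Z_p$ of $Z$ further reduces to a sequence $0 \to X_0 \to Y'' \to Z_0 \to 0$ with $X_0, Z_0$ having no projective summands. The original sequence splits iff this reduced one does, and the stable-isomorphism hypothesis is preserved. Now Krull--Schmidt, which holds since $\A$ is locally finite, applied to the stable isomorphism $Y'' \oplus P' \cong X_0 \oplus Z_0 \oplus Q'$ forces $Y'' \cong X_0 \oplus Z_0 \oplus R$ for some projective $R$, since $X_0, Z_0$ have no projective summands. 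The length identity $\ell(Y'') = \ell(X_0) + \ell(Z_0)$ from the reduced short exact sequence then forces $\ell(R) = 0$, whence $Y'' \cong X_0 \oplus Z_0$.

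The main obstacle is the final step: showing that a short exact sequence $0 \to X_0 \to X_0 \oplus Z_0 \to Z_0 \to 0$ in a Krull--Schmidt finite-length category necessarily splits. The plan is to fix an isomorphism $Y'' \cong X_0 \oplus Z_0$ and write the inclusion as $\iota = \binom{\alpha}{\beta}$ and the quotient as $\pi = (\gamma,\delta)$ with $\alpha \in \End(X_0)$ and $\delta \in \End(Z_0)$. A splitting is immediate whenever $\alpha$ or $\delta$ is an isomorphism, via the retraction $(\alpha^{-1},0)$ or the section $\binom{0}{\delta^{-1}}$ respectively. Otherwise, Fitting's lemma (available by finite length) decomposes $\alpha$ into an iso piece on some summand $X_a \subseteq X_0$ and a nilpotent piece on the complement; the iso piece can be peeled off as a direct summand of the sequence using the retraction $(\alpha|_{X_a}^{-1},0)$, reducing to a strictly smaller SES. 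A symmetric argument on $\delta$ reduces $Z_0$, and induction on $\ell(X_0)+\ell(Z_0)$ eventually terminates—the residual case where both $\alpha$ and $\delta$ are fully nilpotent is excluded because the exactness relations $\gamma\alpha+\delta\beta=0$ and image-equals-kernel, combined with Krull--Schmidt uniqueness of indecomposable summands, force any nontrivial extension to produce a new indecomposable summand in $Y''$ absent from $X_0\oplus Z_0$, contradicting $Y''\cong X_0\oplus Z_0$.
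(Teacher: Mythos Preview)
Your reduction steps are sound and match the paper's: strip projective summands from $X$ and $Z$ using that projectives are injective, then apply Krull--Schmidt to the stable isomorphism. Your length argument forcing $R=0$ is a valid simplification (the paper carries an unspecified projective $P$ through its dimension count instead).

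The gap is in your final step, the Fitting-based induction for $(\text{ii})\Rightarrow(\text{i})$. Peeling off the isomorphism pieces of $\alpha$ and $\delta$ works once on each side, but after those two reductions the new diagonal entries are genuinely nilpotent and Fitting gives nothing further. Your handling of the residual case---``any nontrivial extension would produce a new indecomposable summand in $Y''$ absent from $X_0\oplus Z_0$''---is not a proof. The middle term \emph{is} $X_0\oplus Z_0$, so its indecomposable summands are exactly those of $X_0$ and $Z_0$; there is no contradiction to extract from Krull--Schmidt alone here. What you are asserting is precisely Miyata's theorem, and its proof requires an additional argument beyond the Fitting decomposition.

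The paper's route at this point is far simpler and uses only that $\Hom$-spaces are finite-dimensional. Apply $\Hom_{\A}(Z_0,-)$ to the reduced sequence to get
\[
0 \to \Hom_{\A}(Z_0,X_0) \to \Hom_{\A}(Z_0,Y'') \xrightarrow{\pi_*} \Hom_{\A}(Z_0,Z_0) \to V \to 0.
\]
Since you have already established $Y''\cong X_0\oplus Z_0$, the middle term has dimension $\dim\Hom_{\A}(Z_0,X_0)+\dim\Hom_{\A}(Z_0,Z_0)$, and the alternating sum forces $\dim V=0$. Thus $\pi_*$ is surjective, $\id_{Z_0}$ lifts to a section, and the sequence splits. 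Replace your Fitting argument with this two-line dimension count.
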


\begin{proof}
The implications (i) $\Rightarrow$ (ii) $\Rightarrow$ (iii) are trivial, so suppose that (iii) holds, i.e.\ $Y \oplus P_1 \simeq X \oplus Z \oplus P_2$ for some projective objects $P_1$ and $P_2$. If either $X$ or $Z$ is projective, then the sequence splits, so suppose that this is not the case. By the Krull-Schmidt Theorem, we may decompose these two objects as $X \simeq X' \oplus P_X$ and $Z \simeq Z' \oplus P_Z$, where $P_X$ and $P_Z$ are projective, and $X'$ and $Z'$ have no projective direct summands. Then we split off $P_X$ and $P_Z$ from the short exact sequence and obtain a new one of the form
$$0 \to X' \to Y' \xrightarrow{\pi} Z' \to 0$$
with $Y \simeq Y' \oplus P_X \oplus P_Z$. Note that $Y'$ is stably isomorphic to $X' \oplus Z'$, and that this short exact sequence splits if and only if the original one does. 

Applying $\Hom_{\A}(Z',-)$ to this sequence, we obtain an exact sequence
$$0 \to \Hom_{\A}(Z',X') \to \Hom_{\A}(Z',Y') \xrightarrow{\pi_*} \Hom_{\A}(Z',Z') \to V \to 0$$
of finite dimensional $k$-vector spaces for some vector space $V$. From the isomorphism $Y \oplus P_1 \simeq X \oplus Z \oplus P_2$ and the three isomorphisms involving $X', Y'$ and $Z'$, we see that
$$Y' \oplus P_X \oplus P_Z \oplus P_1 \simeq X' \oplus Z' \oplus P_X \oplus P_Z \oplus P_2$$
and so by the Krull-Schmidt Theorem there is an isomorphism $Y' \simeq X' \oplus Z' \oplus P$ for some projective object $P$. Inserting $X' \oplus Z' \oplus P$ for $Y'$ in the four-term exact sequence above, and taking the alternating sum of the dimensions, we obtain
$$0 = \dim_k \Hom_{\A}(Z',P) + \dim_k V ,$$
hence $V = 0$. The map $\pi_*$ is then surjective, so the short exact sequence with $X',Y'$ and $Z'$ splits.   
\end{proof}

\subsection*{Minimal resolutions} 
A {\em projective resolution} $P_{\bu}$ of $X$ in $\C$ is an exact sequence 
\[
  \cdots \rightarrow P_2\rightarrow P_1\rightarrow P_0
      \rightarrow X\rightarrow 0
\]
in $\C$ such that $P_i$ is projective for each $i$.
Let $\Omega_{P_{\bu}}(X)$ be the kernel of the morphism $P_0\rightarrow X$,
and write $\Omega^1_{P_{\bu}}(X) = \Omega_{P_{\bu}}(X)$.
Let $\Omega^n_{P_{\bu}}(X) = \Omega_{P_{\bu}} (\Omega^{n-1}_{P_{\bu}}(X))$ for each $n> 1$,
where we view the morphism $P_{n-1}\rightarrow P_{n-2}$ as factoring
through $\Omega^{n-1}_{P_{\bu}}(X)$.
The objects $\Omega^n_{P_{\bu}}(X)$ depend on the projective resolution $P_{\bu}$, and are therefore not invariants of $X$. However, if $Q_{\bu}$ is another projective resolution of $X$, then $\Omega^n_{P_{\bu}}(X)$ and $\Omega^n_{Q_{\bu}}(X)$ are stably isomorphic for all $n$, by Schanuel's Lemma (Lemma~\ref{lem:Schanuel}).

A projective resolution $P_{\bu}$ of $X$ in $\C$ 
is {\em minimal} if $P_0=P(X)$ is a projective cover of $X$ and for each
$n\geq 1$, $P_n=P(\Omega^n_{P_{\bu}}(X))$ is a projective cover of $\Omega^n_{P_{\bu}}(X)$
(see, e.g.,~\cite[Section 7.9]{BD}).
Minimal resolutions exist and are unique up to isomorphism,
as a consequence of existence and uniqueness of
projective covers. For such a resolution $P_{\bu}$, we write $\Omega_{\C}^n(X)$ instead of $\Omega^n_{P_{\bu}}(X)$, since these objects are unique up to isomorphism and depend only on $X$.

Note that if we take any projective resolution of the unit object $\unit$, and tensor it with an object $X$, then the result is a projective resolution of $X$. In particular, the objects $\Omega_{\C}^n( \unit ) \otimes X$ and $\Omega_{\C}^n(X)$ are stably isomorphic. Also note that the existence of left and right duals implies that we may ``dualize'' everything we have done so far. Thus every object in a finite tensor category admits a minimal injective resolution, which is unique up to isomorphism, and we define $\Omega_{\C}^{-n}(X)$ using the cokernels in such a resolution.

We define $\Ext_{\C}^n(X,Y)$ for objects $X,Y$ just as we do in any abelian category with enough projective objects, namely by using any projective resolution of $X$. 
More specifically, for any two objects $X,Y$ of $\C$ and any nonnegative integer $n$, we define
\[
  \Ext^n_{\C} (X,Y) = \Coh^n (\Hom_{\C} (P_{\bu} , Y)) = \Ker d_{n+1}^*/\Ima
    d_n^* , 
\]
where $P_{\bu}$ is a projective resolution of $X$ 
with differentials $d_i:P_i\rightarrow P_{i-1}$, 
$d_i^*(f) = fd_i$ for all $i>0$ and $f\in\Hom_{\C}(P_{i-1}, Y)$, 
and $d_0^* = 0$.

\begin{lemma}\label{lem:Ext-Hom}
Let $P_{\bu}$ be a minimal projective resolution of an object
$X$ in $\C$, and let $X_i$ be a simple object of $\C$.
Then for all $n\geq 1$, 
\[
  \Ext^n_{\C}(X, X_i) \cong \Hom_{\C} (P_n,X_i)
  \cong \Hom_{\C}(\Omega^n_{\C}(X), X_i) . 
\]
\end{lemma}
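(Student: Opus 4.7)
The plan is to deduce both isomorphisms from a single structural feature of minimal resolutions: for each $n \geq 1$, the differential $d_n \colon P_n \to P_{n-1}$ factors as $P_n \xrightarrow{\pi_n} \Omega^n_{\C}(X) \hookrightarrow P_{n-1}$, where $\pi_n$ is a projective cover, and the inclusion exhibits $\Omega^n_{\C}(X)$ as the kernel of the projective cover $P_{n-1} \to \Omega^{n-1}_{\C}(X)$. Such a kernel is a superfluous (small) subobject, hence sits inside $\rad(P_{n-1})$. Here I rely on the equivalence, recalled earlier in the excerpt, between $\C$ and the category of finite dimensional modules over some finite dimensional $k$-algebra, so that the Jacobson radical and the standard facts about essential epimorphisms transport to $\C$.

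For the first isomorphism, I would argue that $d_n^{*} = 0$ on $\Hom_{\C}(P_{n-1}, X_i)$ for every $n \geq 1$. Indeed, since $X_i$ is simple, any morphism $f \colon P_{n-1} \to X_i$ annihilates $\rad(P_{n-1})$, and because $\Ima d_n = \Omega^n_{\C}(X) \subseteq \rad(P_{n-1})$ we obtain $f \circ d_n = 0$. Applying this to both $d_n^{*}$ and $d_{n+1}^{*}$ shows $\Ima d_n^{*} = 0$ and $\Ker d_{n+1}^{*} = \Hom_{\C}(P_n, X_i)$, whence
\[
  \Ext^n_{\C}(X, X_i) \;=\; \Ker d_{n+1}^{*}/\Ima d_n^{*} \;\cong\; \Hom_{\C}(P_n, X_i).
\]

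For the second isomorphism, I would use precomposition with the projective cover $\pi_n$ to define $\pi_n^{*}\colon \Hom_{\C}(\Omega^n_{\C}(X), X_i) \to \Hom_{\C}(P_n, X_i)$. Injectivity is immediate because $\pi_n$ is an epimorphism. For surjectivity, note that any $g \colon P_n \to X_i$ annihilates $\rad(P_n)$, and therefore annihilates the superfluous kernel $\Ker \pi_n \subseteq \rad(P_n)$; hence $g$ factors uniquely through $\pi_n$, as required.

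The main obstacle I anticipate is the soft one of justifying the superfluousness step in the intrinsic categorical language of $\C$ rather than by appealing to modules over a finite dimensional algebra. Concretely, one must be confident that in a locally finite quasi-Frobenius $k$-linear abelian category a morphism from a projective object to a simple object must kill every superfluous subobject. Once this bridge is in place, both isomorphisms fall out of the simple observation that minimality forces the image of every differential to be superfluous in the domain of the previous one.
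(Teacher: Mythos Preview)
Your argument is correct, and it takes a genuinely different route from the paper's. The paper argues intrinsically, without mentioning the radical or the equivalence with a module category: it assumes some $f \in \Hom_{\C}(P_n, X_i)$ has $f d_{n+1} \neq 0$, uses the universal property of the projective cover $P(X_i)\to X_i$ to split off copies of $P(X_i)$ from both $P_n$ and $P_{n+1}$, and then shows that the induced component $P(X_i)\to P(X_i)$ of $d_{n+1}$ is an isomorphism, contradicting minimality of the resolution. Your approach is the standard module-theoretic one: transport to finite dimensional modules over a finite dimensional algebra, use that the kernel of a projective cover is superfluous and hence contained in the radical, and conclude that every morphism to a simple object annihilates it. Your proof is shorter and more conceptual, and it treats both isomorphisms by the same mechanism; the paper's proof is longer but stays within the abelian-category language, which matches its stated preference (see the remark opening the proof of the preceding lemma on decomposing projectives) for direct arguments over appeals to the module-category equivalence.
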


\begin{proof}
We will show that the differentials on $\Hom_{\C}(P_{\bu}, X_i)$
are all zero maps.
The first isomorphism will then follow immediately.

Let $f\in \Hom_{\C}(P_n,X_i)$. 
If $f$ is nonzero, then it is an epimorphism since $X_i$ is simple.
Let $p:P(X_i)\rightarrow X_i$ be a projective cover, so that 
there is an epimorphism $g: P_n\rightarrow P(X_i)$ such that $pg=f$.
It follows that $P_n\cong P(X_i)\oplus Q_n$ for some projective object
$Q_n$, and under this isomorphism, $g$ may be viewed as 
the corresponding canonical projection onto $P(X_i)$.
Now if $fd_{n+1}\neq 0$, then $fd_{n+1}$ is an epimorphism,
and so there is an epimorphism $h:P_{n+1}\rightarrow P(X_i)$
such that $ph = fd_{n+1}$. 
Again, $h$ splits and 
$P_{n+1}\cong P(X_i)\oplus Q_{n+1}$ for a projective object $Q_{n+1}$, and
$h$ may be viewed as the corresponding canonical projection onto $P(X_i)$.

Now let $\hat{d}_{n+1} : P(X_i)\rightarrow P(X_i)$ denote the
following composition of morphisms: 
canonical inclusion of $P(X_i)$ into $P(X_i)\oplus Q_{n+1}$, then
isomorphism to $P_{n+1}$, then $d_{n+1}$, then isomorphism to $P(X_i)\oplus Q_n$,
then canonical projection onto $P(X_i)$.
Then $p\hat{d}_{n+1}= f d_{n+1}\mid_{P(X_i)} \neq 0$ by construction. 
Since $\dim_k\Hom_{\C}(P(X_i),X_i)=1$ by equation~(\ref{eqn:dim-Hom}), 
$p\hat{d}_{n+1} = \alpha p$ for some nonzero scalar $\alpha$.
Replacing $\hat{d}_{n+1}$ by $\alpha^{-1} \hat{d}_{n+1}$, we may assume that
$p\hat{d}_{n+1} = p$. 
We claim that this forces $\hat{d}_{n+1}$ to be an isomorphism
since $\dim_k\Hom_{\C}(P(X_i), X_j) = \delta_{i,j}$ by equation~(\ref{eqn:dim-Hom}).
To see this, note that 
if $\hat{d}_{n+1}$ were not an isomorphism, then $\Ima (\hat{d}_{n+1})$ would
be a subobject of $P(X_i)$, and necessarily a subobject of $Y_{m-1}$ in the
Jordan-H\"older series
\[
   0 = Y_0\subseteq \cdots \subseteq Y_{m-1}\subseteq Y_m = P(X_i) 
\]
with $Y_m/Y_{m-1} \cong X_i$. But then $p \hat{d}_{n+1}=0$,
a contradiction. 
Therefore $\hat{d}_{n+1}$ is an isomorphism.
However, this contradicts minimality of the projective resolution $P_{\bu}$ in the following way. 
By definition, 
$P_n$ is a projective cover of $\Omega^n_{\C}(X)$.
The image of $P(X_i)$ in $\Omega^n_{\C}(X)$ under the projective cover
morphism from $P_n\cong P(X_i)\oplus Q_n$ to $\Omega^n_{\C}(X)$
cannot be zero, as this would contradict the definition of projective cover.
To see this, map $Q_n$ to $\Omega^n_{\C}(X)$ via the canonical inclusion 
into $P_n\cong P(X_i)\oplus Q_n$ followed by
the epimorphism $P_n\rightarrow \Omega^n_{\C}(X)$.
This composite morphism is an epimorphism, since $P(X_i)$
is in the kernel of $P_n\rightarrow \Omega^n_{\C}(X)$.
However the length of $P_n$ is greater than that of $Q_n$,
so there can be no epimorphism from $Q_n$ to $P_n$,
contradicting the assumption that $P_n$ is the projective cover
of $\Omega^n_{\C}(X)$.
On the other hand, the image of $P(X_i)$ in $\Omega^n_{\C}(X)$ must be zero
since $\hat{d}_{n+1}$ is an isomorphism and $\Ima (d_{n+1})
\subseteq \Ker (d_n)$. This is a contradiction.
Therefore $fd_{n+1}=0$. 

The second isomorphism in the statement is induced by the epimorphism $P_n
\rightarrow \Omega^n_{\C}(X)$, since $fd_{n+1} =0$. 
\end{proof}

We note that for any two objects $X , Y$ of $\C$ and $n\geq 1$,
$\Ext^n_{\C}(X,Y)$ may be identified with 
equivalence classes of $n$-extensions of $Y$ by $X$~\cite{Oort}.
There are also long exact Ext sequences associated to
short exact sequences of objects in $\C$~\cite{Murfet}, and dimension shifting with respect to any projective resolution of $X$ works as one would expect.
We will use these facts about $\Ext^n_{\C}(X,Y)$ in the sequel.

\subsection*{Frobenius-Perron dimensions}\label{subsection: fpdim}

A very useful invariant and tool in the theory of finite tensor categories is the notion of Frobenius-Perron dimension.
Here we recall the definition and some of its 
properties that we will use.

As before, 
let $\C$ be a finite tensor category with (isomorphism classes of) simple objects $X_1, \ldots, X_r$. 
For each object $X$ in $\C$, let $N_X$ be the matrix of left multiplication by $X$, specifically 
\[ (N_X)_{ij} = ([X\otimes X_i:X_j])_{ij} ,\]
where $[X\otimes X_i:X_j]$ is the multiplicity of $X_j$ in a 
Jordan-H\"older series of the tensor product
object $X\ot X_i$ \cite[Section 1.5]{EGNO}.
The entries of this matrix are thus nonnegative integers.
The {\em Frobenius-Perron dimension} $\FPdim (X)$ of $X$ is the largest nonnegative real eigenvalue of the matrix $N_{X}$, which exists by the Frobenius-Perron Theorem~\cite[Theorem 3.2.1]{EGNO}.  
Moreover, $\FPdim (X_i) \geq 1$ for all 
$i = 1, \dots, r$~\cite[Proposition 3.3.4(2)]{EGNO}. 
Positivity characterizes the Frobenius-Perron dimension in the sense that $\FPdim$, extended by additivity to be a character of the Grothendieck ring of $\C$, is the unique such character that maps simple objects to positive real numbers~\cite[Proposition 3.3.6(3)]{EGNO}.
It follows that $\FPdim (X) >0$ for each nonzero object $X$ of $\C$,
since $\FPdim(X) = \sum_{i=1}^r a_i \FPdim(X_i)$
if $a_i=[X:X_i]$ for each $i$.

\section{Support varieties}\label{supp-var}


Here we adapt to finite tensor categories 
some of the definitions and results given in \cite{Feldvoss-W,Ostrik}
on support varieties for modules of finite dimensional Hopf algebras.
See also~\cite{BKSS} for tensor triangulated categories.  
These ideas originated in the theory of support varieties
for representations of finite groups 
(see Carlson~\cite{Carlson1}, Quillen~\cite{Quillen}, or the book by
Benson~\cite{Benson2}). 


Support varieties for objects in $\C$ are defined in terms of cohomology. Given two objects $X,Y$, we denote the graded $k$-vector space $\oplus_{n=0}^{\infty} \Ext_{\C}^n(X,Y)$ by $\Ext_{\C}^*(X,Y)$. The Yoneda product turns $\Ext_{\C}^*(X,X)$ into a graded $k$-algebra, and $\Ext_{\C}^*(X,Y)$ into a graded left $\Ext_{\C}^*(Y,Y)$-module and a graded right $\Ext_{\C}^*(X,X)$-module. We denote the cohomology algebra $\Ext_{\C}^*( \unit, \unit )$ of the unit object $\unit$ by $\Coh^*( \C )$; this is the \emph{cohomology ring} of $\C$, and by \cite[Theorem 1.7]{SA} it is graded-commutative. Note that $\Coh^0(\C) = k$ since the unit object is simple, and k is algebraically closed.

The exact functor $- \otimes X$ induces a homomorphism
$$\Coh^*( \C ) \xrightarrow{\varphi_X} \Ext_{\C}^*(X,X)$$
of graded $k$-algebras, hence $\Ext_{\C}^*(X,Y)$ becomes a left $\Coh^*( \C )$-module via $\varphi_Y$, and a right $\Coh^*( \C )$-module via $\varphi_X$. By modifying the proof of \cite[Theorem 1.1]{SS04} to our setting, one can show that the left and right $\Coh^*( \C )$-module structures of $\Ext_{\C}^*(X,Y)$ coincide up to a sign. More precisely, if $\zeta \in \Coh^m( \C )$ and $\theta \in \Ext_{\C}^n(X,Y)$, then
$$\varphi_Y ( \zeta ) \circ \theta = (-1)^{mn} \theta \circ \varphi_X ( \zeta )$$
where the symbol $\circ$ denotes the Yoneda product. Consequently, when we view $\Ext_{\C}^*(X,Y)$ as a $\Coh^*( \C )$-module, it does not matter if we view it as a left or as a right module.


Since the cohomology ring of $\C$ is graded-commutative, its even part $\Coh^{2*}( \C ) = \oplus_{n=0}^{\infty} \Coh^{2n}( \C )$ is commutative. Furthermore, when the characteristic of the field $k$ is not two, then all the homogeneous elements in odd degrees are nilpotent, whereas when the characteristic is two, then the whole cohomology ring is commutative. We therefore make the following definition, of the commutative graded ring we shall use when we define support varieties.
\begin{definition}
For a finite tensor category $\C$ we define
$$\Ho ( \C ) = \left \{ 
\begin{array}{ll}
\Coh^*( \C ) & \text{if the characteristic of $k$ is two,} \\
\Coh^{2*}( \C ) & \text{if not.}
\end{array} 
\right.$$
\end{definition} 

Given objects $X,Y$ in $\C$, we denote by $I_{\C} (X,Y)$ the annihilator ideal of $\Ext_{\C}^*(X,Y)$  under the
action of $\Ho ( \C )$ described earlier. This is a homogeneous ideal, and whenever it is proper -- that is, when $\Ext_{\C}^*(X,Y)$ is nonzero -- it is contained in the unique maximal homogeneous ideal $\m_0 = \Coh^{+} (\C)$ of $\Ho ( \C )$. The \emph{support variety} of the pair $(X,Y)$ is now defined as
$$\VC(X,Y) \stackrel{\text{def}}{=} \{ \m_0 \} \cup \{ \m \in \Maxspec \Ho ( \C ) \mid I_{\C} (X,Y) \subseteq \m \}$$
where $\Maxspec \Ho ( \C )$ is the set of maximal ideals of $\Ho ( \C )$. Furthermore, the support variety of the single object $X$ is defined as $\VC(X) = \VC(X,X)$.  We also write $I_{\C}(X) = I_{\C}(X,X)$ and $\VC =\VC ( \unit )$.
Note that $\VC(X,Y) \subseteq \VC=\Maxspec \Ho(\C)$
for all objects $X,Y$. 
Note also that by definition, every support variety contains the point $\m_0$. If this were not part of the definition, then the variety of every pair of objects $X,Y$ with $\Ext_{\C}^*(X,Y) = 0$ would be empty; namely, in this case, the annihilator ideal $I_{\C} (X,Y)$ would necessarily be the whole cohomology ring $\Ho ( \C )$.
Finally, note that if $\Ext^n_{\C}(X,Y) = 0$ for $n \gg 0$,
then $\VC(X,Y) = \{ \M_0\}$. In particular, if $P$ is a projective object, then $\VC (P) = \{ \m_0 \}$.

The following lemma is useful in extending some of the classical properties of support varieties from other contexts (for example, group theory) to our more general setting. 

\begin{lemma}\label{lem:ideals}
\begin{itemize}
\item[(i)] For all objects $X$, $Y$ in $\C$, $I_{\C}(X) + I_{\C}(Y)\subseteq I_{\C}(X,Y)$.
\item[(ii)] For every exact sequence $0\to Y_1\to Y_2\to Y_3\to 0$ and all 
objects $W$ in $\C$,
 $I_{\C}(Y_j, W) \cdot I_{\C}(Y_l, W)\subseteq I_{\C}(Y_i,W)$ and $\VC(Y_i, W) \subseteq \VC(Y_j,W) \cup \VC(Y_l, W)$ whenever $\{i,j,l\} = \{1,2,3\}$. Similarly $I_{\C}(W,Y_j) \cdot I_{\C}(W,Y_l)\subseteq I_{\C}(W,Y_i)$ and $\VC(W,Y_i,) \subseteq \VC(W,Y_j) \cup \VC(W,Y_l)$ whenever $\{i,j,l\} = \{1,2,3\}$.
\end{itemize}
\end{lemma}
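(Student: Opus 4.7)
My plan is to reduce both parts to a single structural fact and then to the standard machinery of Yoneda Ext long exact sequences.

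For part (i), I would first observe that $I_{\C}(X)$ is precisely the kernel of the ring homomorphism $\varphi_X\colon \Ho(\C)\to \Ext^*_{\C}(X,X)$. One inclusion is immediate; for the other, if $\zeta$ annihilates $\Ext^*_{\C}(X,X)$ then in particular $0 = \varphi_X(\zeta)\circ \id_X = \varphi_X(\zeta)$ because $\id_X$ is the unit of the Yoneda ring. The action of $\zeta\in \Ho(\C)$ on $\Ext^*_{\C}(X,Y)$ is, up to sign, left multiplication by $\varphi_Y(\zeta)$ and equivalently right multiplication by $\varphi_X(\zeta)$, so vanishing of either $\varphi_X(\zeta)$ or $\varphi_Y(\zeta)$ places $\zeta$ in $I_{\C}(X,Y)$. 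This yields both $I_{\C}(X)\subseteq I_{\C}(X,Y)$ and $I_{\C}(Y)\subseteq I_{\C}(X,Y)$, hence (i).

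For the ideal statement in (ii), apply $\Hom_{\C}(-,W)$ to the short exact sequence $0\to Y_1\to Y_2\to Y_3\to 0$ to get a long exact sequence whose structure maps $\alpha_n\colon \Ext^n_{\C}(Y_a,W)\to \Ext^n_{\C}(Y_b,W)$ and connecting map $\delta_n\colon \Ext^n_{\C}(Y_1,W)\to \Ext^{n+1}_{\C}(Y_3,W)$ are $\Ho(\C)$-linear (the latter because $\delta_n$ is Yoneda composition with the extension class representing the sequence, and the $\Ho(\C)$-action is through $\varphi_W$ on the target slot). Fix $\zeta\in I_{\C}(Y_j,W)$, $\eta\in I_{\C}(Y_l,W)$ and $\theta\in \Ext^n_{\C}(Y_i,W)$; in each of the three cases for $\{i,j,l\}=\{1,2,3\}$ the image of $\zeta\cdot\theta$ in $\Ext^*_{\C}(Y_j,W)$ under the appropriate map vanishes (since $\zeta$ already kills that space), so exactness expresses $\zeta\cdot\theta$ as the image of some $\psi\in \Ext^*_{\C}(Y_l,W)$; then $\eta\zeta\cdot\theta$ is, up to sign, the image of $\eta\cdot\psi=0$, giving $\eta\zeta\in I_{\C}(Y_i,W)$. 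The argument for the other side $I_{\C}(W,Y_i)$ is formally identical, using instead the long exact sequence obtained from $\Hom_{\C}(W,-)$.

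The variety inclusions follow formally: $\Ho(\C)$ is commutative, so every $\m\in\Maxspec\Ho(\C)$ is prime. If $\m\in \VC(Y_i,W)$ and $\m\neq\m_0$, then $\m\supseteq I_{\C}(Y_i,W)\supseteq I_{\C}(Y_j,W)\cdot I_{\C}(Y_l,W)$, and primality forces $\m$ to contain one of the two factors, so $\m\in\VC(Y_j,W)\cup\VC(Y_l,W)$; the same reasoning handles the first-variable case and deals with $\m_0$ since it lies in every support variety by definition.

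The only genuinely subtle point is the sign-compatibility under the $\Ho(\C)$-action, together with the $\Ho(\C)$-linearity of the connecting homomorphism; these are standard for Yoneda Ext and signs are irrelevant for annihilation statements, so this will not block the argument.
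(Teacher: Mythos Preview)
Your proposal is correct and follows essentially the same route as the paper: part~(i) via the factorization of the $\Ho(\C)$-action through $\varphi_X$ and $\varphi_Y$, and part~(ii) via the $\Ho(\C)$-linearity of the long exact Ext sequence followed by primality of maximal ideals. If anything, your treatment of~(ii) is slightly more careful---you spell out the two-step chase (first kill the image with $\zeta$, then lift and kill with $\eta$) and flag the linearity of the connecting map, whereas the paper compresses this into the remark that the product ``acts as zero'' on the middle term.
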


\begin{proof}
(i)
Recall from the beginning of this section that there are two equivalent definitions of the
action of $\Ho(\C)$ on $\Ext^*_{\C}(X, Y)$, one starting with $-\ot X$ and the other starting
with $ - \ot Y$, both followed by Yoneda composition of generalized extensions (and these actions coincide up to a sign). 
By definition, 
the action of $\Ho(\C)$ on $\Ext^*_{\C}(X, Y)$ factors through the action of $\Ho(\C)$ on $\Ext^*_{\C}(X, X)$ (and the same is true of $\Ext^*_{\C}(Y, Y)$). Thus $I_{\C}(X) + I_{\C}(Y)\subseteq I_{\C}(X,Y)$.

(ii)
Given an exact sequence $0\to Y_1\to Y_2\to Y_3\to 0$ and an object $W$ in $\C$,
we see from the long exact Ext sequence~\cite{Murfet}
\[
\cdots \rightarrow \Ext^n_{\C}(Y_3,W)\rightarrow \Ext^n_{\C}(Y_2,W) \rightarrow
  \Ext^n_{\C}(Y_1,W)\rightarrow \Ext^{n+1}_{\C}(Y_3,W)\rightarrow \cdots 
\]  
that there is a containment $I_{\C}(Y_j, W) \cdot I_{\C}(Y_l, W)\subseteq I_{\C}(Y_i,W)$.
Indeed, the product of two elements, one in $I_{\C}(Y_j,W)$ and one in $I_{\C}(Y_l,W)$,
necessarily annihilates both $\Ext^*_{\C}(Y_j,W)$ and 
$\Ext^*_{\C}(Y_l,W)$, and so in the long exact Ext sequence, 
the product acts as zero on the terms $\Ext^n_{\C}(Y_i,W)$.

Lastly, we will show the inclusion of the varieties.
If $\M\in \VC(Y_i, W)$, then from what we have just shown, there are inclusions $I_{\C}(Y_j, W) \cdot I_{\C}(Y_l, W)\subseteq I_{\C}(Y_i,W)\subseteq \M$. 
Since $\M$ is a maximal ideal, and therefore prime, it follows that $I_{\C}(Y_j, W) \subseteq \M$ or $I_{\C}(Y_l, W)\subseteq \M$. Thus $\M\in \VC(Y_j,W) \cup \VC(Y_l, W)$ as desired. The other half of the statement is proved similarly.
\end{proof}


We now list some natural properties enjoyed by these support varieties, properties for which no finiteness condition is necessary. The following proposition holds just as in the finite group case and other more general settings; see, for example,~\cite[Section~5.7]{Benson2}. We include a proof for completeness. 

\begin{proposition}\label{prop:props}
Let $\C$ be a finite tensor category, 
and let $X$, $Y$, $Y_1$, $Y_2$, and $Y_3$ be objects in $\C$. Then
\begin{itemize}
\item[(i)] $V_{\C}(X\oplus Y) = V_{\C}(X)\cup V_{\C}(Y)$.
\item[(ii)] $V_{\C}(X,Y)\subseteq V_{\C}(X)\cap V_{\C}(Y)$.
\item[(iii)] $V_{\C}(X) = \cup_{i=1}^r V_{\C}(X, X_i) 
    = \cup_{i=1}^r  V_{\C}(X_i,X)$, where $\{X_i \mid i=1,\ldots, r\}$ 
is a set of simple objects of $\mathcal C$, one from 
each isomorphism class.
\item[(iv)] If $\ 0\rightarrow Y_1\rightarrow Y_2\rightarrow Y_3\rightarrow 0$
is a short exact sequence, then 
\[
   \VC(Y_i)  \subseteq  \VC(Y_j) \cup \VC(Y_l) 
\]
whenever $\{i,j,l\} = \{1,2,3\}$. 
\item[(v)]$V_{\C}(X\ot Y) \subseteq V_{\C}(X)$, and if 
$\C$ is braided, then
$V_{\C}(X\ot Y)\subseteq V_{\C}(X)\cap V_{\C}(Y)$.
\item[(vi)] $V_{\C}(\Omega^1_{P_{\bu}} (X)) = V_{\C}(X)$ for any projective resolution $P_{\bu}$ of $X$.
\end{itemize}
\end{proposition}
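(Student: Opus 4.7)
The overall strategy is to translate every containment of support varieties into the opposite containment of annihilator ideals in $\Ho(\C)$, and to exploit primality of each $\m \in \Maxspec \Ho(\C)$. Lemma~\ref{lem:ideals}, together with the facts that $\VC(P) = \{\m_0\}$ for every projective $P$ and that $\m_0$ lies in every support variety by definition, will do essentially all the work.

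For (i), the direct sum decomposition $\Ext^*_\C(X \oplus Y, X \oplus Y) \cong \Ext^*_\C(X,X) \oplus \Ext^*_\C(X,Y) \oplus \Ext^*_\C(Y,X) \oplus \Ext^*_\C(Y,Y)$ gives
\[
  I_\C(X\oplus Y) \;=\; I_\C(X) \cap I_\C(X,Y) \cap I_\C(Y,X) \cap I_\C(Y),
\]
and the two cross terms are redundant by Lemma~\ref{lem:ideals}(i), so the intersection collapses to $I_\C(X) \cap I_\C(Y)$, whose vanishing locus is $\VC(X) \cup \VC(Y)$. Part (ii) is a direct restatement of Lemma~\ref{lem:ideals}(i). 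For (iii), pick a Jordan--H\"older series $0 = Y_0 \subset \cdots \subset Y_m = X$ with $Y_j/Y_{j-1} \cong X_{i_j}$; iterating Lemma~\ref{lem:ideals}(ii) with $W = X$ along the short exact sequences $0 \to Y_{j-1} \to Y_j \to X_{i_j} \to 0$ yields
\[
  \prod_{j=1}^m I_\C(X, X_{i_j}) \;\subseteq\; I_\C(X, Y_m) \,=\, I_\C(X).
\]
If $\m \in \VC(X)$, primality forces $I_\C(X, X_{i_j}) \subseteq \m$ for some $j$, so $\m \in \VC(X, X_{i_j})$; combined with (ii), this gives the first equality, and the $\VC(X_i, X)$ variant follows from the symmetric half of Lemma~\ref{lem:ideals}(ii).

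For (iv), apply (iii) to rewrite each $\VC(Y_i) = \cup_k \VC(Y_i, X_k)$, then invoke the variety containment in Lemma~\ref{lem:ideals}(ii) pointwise in $X_k$ and re-union. For (v), observe that the exact functor $- \ot Y$ on $\C$ induces a ring homomorphism $\Ext^*_\C(X, X) \to \Ext^*_\C(X \ot Y, X \ot Y)$ with respect to the Yoneda product (using biexactness of $\ot$), and that $\varphi_{X\ot Y}$ factors as the composite of $\varphi_X$ with this homomorphism. Hence $I_\C(X) \subseteq I_\C(X \ot Y)$, proving $\VC(X\ot Y) \subseteq \VC(X)$; in the braided case, the natural isomorphism $X \ot Y \cong Y \ot X$ combined with the same inclusion applied to $Y\ot X$ gives the symmetric containment in $\VC(Y)$. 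Finally, (vi) follows from (iv) applied to $0 \to \Omega^1_{P_\bu}(X) \to P_0 \to X \to 0$: both orientations of the three-term containment hold, and $\VC(P_0) = \{\m_0\}$ is absorbed since $\m_0$ lies in every support variety.

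The main technical point will be the functoriality argument in (v): one must verify that tensoring an $n$-fold Yoneda extension on the right by a fixed object $Y$ defines a multiplicative map on $\Ext^*$, and that when applied to $\varphi_X(\zeta)$ this recovers $\varphi_{X\ot Y}(\zeta)$ up to the associativity constraint. The remaining steps are routine manipulations of ideals together with primality of maximal ideals.
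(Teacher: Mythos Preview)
Your proposal is correct and follows essentially the same approach as the paper: reduce everything to Lemma~\ref{lem:ideals} plus primality of maximal ideals, with the factoring of $\varphi_{X\ot Y}$ through $\varphi_X$ handling (v). The only cosmetic difference is in (iv), where the paper applies Lemma~\ref{lem:ideals}(ii) directly with $W=Y_i$ (so $\VC(Y_i)=\VC(Y_i,Y_i)\subseteq \VC(Y_j,Y_i)\cup \VC(Y_l,Y_i)$, then uses (ii)) rather than detouring through the decomposition (iii) over simples; both routes are valid.
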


\begin{proof}
(i)
First, we want to prove that $I_{\C}(X\oplus Y) = I_{\C}(X)\cap I_{\C}(Y)$ and then show that this implies that $V_{\C}(X\oplus Y) = V_{\C}(X)\cup V_{\C}(Y)$, as desired.

Since $\Ext^*_{\C}(X,X)\oplus \Ext^*_{\C}(Y,Y) \subseteq \Ext^*_{\C}(X\oplus Y, X\oplus Y) $, there is a containment of ideals, $I_{\C}(X\oplus Y) \subseteq I_{\C}(X)\cap I_{\C}(Y)$. For the other inclusion, recall also that $I_{\C}(X)\cap I_{\C}(Y)\subseteq I_{\C}(X) + I_{\C}(Y)\subseteq I_{\C}(X,Y)$, by Lemma~\ref{lem:ideals}(i). Then, since 
\[
   \Ext^*_{\C}(X\oplus Y, X\oplus Y) = \Ext^*_{\C}(X,X)\oplus \Ext^*_{\C}(X,Y)\oplus \Ext^*_{\C}(Y,X)\oplus \Ext^*_{\C}(Y,Y) ,
\]
it follows that $I_{\C}(X)\cap I_{\C}(Y)\subseteq  I_{\C}(X \oplus Y).$

We will check next that the equality $I_{\C}(X\oplus Y) = I_{\C}(X)\cap I_{\C}(Y)$ implies that $V_{\C}(X\oplus Y) = V_{\C}(X)\cup V_{\C}(Y)$, and we first show the inclusion $V_{\C}(X\oplus Y) \subseteq V_{\C}(X)\cup V_{\C}(Y)$. If $\M\in V_{\C}(X\oplus Y)$, then $I_{\C}(X\oplus Y) \subseteq \M$, and so $I_{\C}(X)\cap I_{\C}(Y)\subseteq \M$.
It is enough to show that if $I_{\C}(X)\not \subseteq \M$ then $I_{\C}(Y)\subseteq \M$. If $I_{\C}(X)\not \subseteq \M$, there exists $x\in I_{\C}(X)$ for which $x\notin \M$ and the ideal generated by $x$ and $\M$ generate the cohomology ring. 
Then $1 = ax + s$, with $a\in \Ho(\C)$, and  $s\in \M$. Now, if $y\in I_{\C}(Y)$, then $y = yax + ys$. 
Since $ I_{\C}(Y)$ and $ I_{\C}(X)$ are ideals,
$yax\in  I_{\C}(X) \cap  I_{\C}(Y)\subseteq \M$ and $ys\in \M$. So $ I_{\C}(Y)\subseteq \M$.

For the reverse inclusion $V_{\C}(X)\cup V_{\C}(Y) \subseteq V_{\C}(X\oplus Y)$, consider $\M\in V_{\C}(X)\cup V_{\C}(Y)$. Then $\M\in V_{\C}(X)$ or $\M\in V_{\C}(Y)$, that is $I_{\C}(X)\subseteq \M$ or $I_{\C}(Y)\subseteq \M$.
Since $I_{\C}(X\oplus Y) = I_{\C}(X)\cap I_{\C}(Y)\subseteq \M$, it follows that $\M \in V_{\C}(X\oplus Y)$.

(ii)
This is an immediate consequence of Lemma \ref{lem:ideals}(i). In fact, if $\M\in V_{\C}(X,Y)$, it follows that  $I_{\C}(X) + I_{\C}(Y)\subseteq I_{\C}(X,Y)\subseteq \M$. Then $\M\in V_{\C}(X)\cap  V_{\C}(Y)$.

(iii)
This follows from item (ii) in this proposition and Lemma \ref{lem:ideals}(ii). We will prove only one equality of the statement; the other can be shown in a similar way. 

By (ii), there is a containment $V_{\C}(X_i,X)\subseteq V_{\C}(X_i)\cap V_{\C}(X)\subseteq V_{\C}(X)$, for all $i=1, \ldots, r$. Then  $ \cup _i V_{\C}(X_i, X) \subseteq V_{\C}(X)$. For the other inclusion, recall that since our category $\C$ is a finite tensor category, the object $X$ has finite length. Associated to its Jordan-H{\"o}lder series are short exact sequences which, when we use Lemma \ref{lem:ideals}(ii), give $V_{\C}(X) \subseteq \cup _i V_{\C}(X_i, X)$.

(iv) 
By Lemma~\ref{lem:ideals}(ii), 
$V_{\C}(Y_i)\subseteq V_{\C}(Y_j,Y_i)\cup V_{\C}(Y_l, Y_i)$, 
and by part~(ii) of this proposition, that union is contained in
$V_{\C}(Y_j)\cup V_{\C}(Y_l)$.

(v)
By the definition of the action, we apply
$-\ot X$ and $-\ot X\ot Y$, respectively, followed in both cases by Yoneda composition of generalized extensions,  which implies that $I_{\C}(X) \subseteq I_{\C}(X\otimes Y)$. Consequently, $V_{\C}(X\ot Y) \subseteq V_{\C}(X)$.
Moreover, if $X\otimes Y \cong Y\otimes X$ (for example, when $\mathcal C$ is braided), we also conclude that $V_{\C}(X\ot Y) \subseteq V_{\C}(Y)$ via the same argument. Then, $V_{\C}(X\ot Y) \subseteq V_{\C}(X)\cap V_{\C}(Y)$ when 
$\C$ is braided. 

(vi) This follows from (iv) and the fact that $\VC (P) = \{ \m_0 \}$ for every projective object $P$ of $\C$.
\end{proof}

\begin{remark}\label{rem:one-sided}
In Proposition \ref{prop:props}(v), we see the first instance of the consequences of not assuming that our finite tensor category $\C$ is braided. Recall that when we defined the action of the cohomology ring $\coh^* ( \C )$ on $\Ext^*_{\C}(X,X)$, we used the ring homomorphism
$$\Coh^*( \C ) \xrightarrow{\varphi_X} \Ext_{\C}^*(X,X)$$
induced by the exact tensor product functor $- \ot X$. In doing so, we have made a \emph{choice}, namely that we tensor with $X$ on the right. We could instead choose the functor $X \ot -$, and had we done so, then the first inclusion in Proposition \ref{prop:props}(v) would be $V_{\C}(X\ot Y) \subseteq V_{\C}(Y)$.
\end{remark}

The properties we have just established hold in full generality, without any further assumptions on the cohomology of $\C$. We now state the finiteness condition mentioned earlier, and abbreviate it just \textbf{Fg}:
\begin{assumption}[\textbf{Fg}]
\sloppy The cohomology ring $\Coh^*( \C )$ is finitely generated, and $\Ext_{\C}^*(X,X)$ is a finitely generated $\Coh^*( \C )$-module for all objects $X \in \C$.
\end{assumption}
This was conjectured to hold for all finite tensor categories in \cite{EO}, and this conjecture is still open. A possibly weaker conjecture would be that the cohomology ring $\Coh^*( \C )$ is finitely generated, with no mention of $\Ext_{\C}^*(X,X)$. Note that $\Ext_{\C}^*(X,X)$ is a finitely generated $\Coh^*( \C )$-module for all objects $X$ if and only if $\Ext_{\C}^*(X,Y)$ is a finitely generated $\Coh^*( \C )$-module for all pairs of objects $X,Y$. This follows from the fact that every object in $\C$ has finite length, and that there are only finitely many isomorphism classes of simple objects in $\C$.

\begin{remark}
The finiteness condition \textbf{Fg} is stated in terms of the whole cohomology ring $\Coh^* (\C)$. However, it could just as well be stated in terms of $\Ho (\C)$. Namely, a finite tensor category $\C$ satisfies \textbf{Fg} if and only if $\Ho ( \C )$ is finitely generated, and $\Ext_{\C}^*(X,X)$ is a finitely generated $\Ho ( \C )$-module for all objects $X \in \C$. In the rest of the paper, we shall be using this fact without further mention.
\end{remark}

Without the finiteness condition \textbf{Fg}, the support varieties do not necessarily encode any important homological information; the properties listed above in Proposition \ref{prop:props} are just formal properties that do not tell much about the objects. However, when \textbf{Fg} holds, then the situation is very different as we will see in the rest of the paper.

Let us now recall some facts on varieties defined in terms of the maximal ideal spectrum of a commutative graded ring $R$ which is finitely generated over a field. For a homogeneous ideal $\az$ of $R$, we define $Z ( \az )$ as the set of maximal ideals of $R$ containing $\az$. Then $Z ( \az ) = Z ( \sqrt{\az} )$, where $\sqrt{\az}$ denotes the radical of $\az$. By \cite[Theorem 25]{Matsumura}, the radical of a proper ideal of $R$ is the intersection of all the maximal ideals containing it, hence if $\az$ and $\mathfrak{b}$ are proper homogeneous ideals with $Z ( \az ) \subseteq Z ( \mathfrak {b} )$, then $\sqrt{\mathfrak{b}} \subseteq \sqrt{\az}$. In particular, if $Z ( \az ) = Z ( \mathfrak {b} )$, then $\sqrt{\az} = \sqrt{\mathfrak{b}}$. Finally, we define the dimension of $Z ( \az )$ to be the Krull dimension of $R / \az$, or equivalently, the rate of growth $\gamma \left ( R / \az \right )$ of $R / \az$ (see the beginning of Section \ref{sec:complexity}) as a graded vector space; see \cite[Theorem 5.4.6]{Benson2}. This is well defined, for if $Z ( \az ) = Z ( \mathfrak {b} )$, then $\sqrt{\az} = \sqrt{\mathfrak{b}}$, and the Krull dimension of $R/ \az$ is clearly the same as that of $R/ \sqrt{\az}$.

The support variety of an object $X$ in $\C$ is by definition the set $Z ( I_{\C}(X) )$ associated to the commutative graded ring $\Ho ( \C )$. The {\em dimension} of $V_{\C}(X)$, denoted $\dim (V_{\C}(X))$, is then defined to be the dimension of this set, that is, the Krull dimension of $\Ho ( \C ) / I_{\C}(X)$. Of course, without the finiteness condition \textbf{Fg} this might actually be infinite, or a finite integer that does not reveal any information about the object $X$. However, as we shall see in Section \ref{sec:complexity}, when \textbf{Fg} holds, then this is a finite integer that measures the ``size'' of the minimal projective resolution of $X$.

\section{Complexity}\label{sec:complexity}

In this section, we define the complexity of an object $X$ as the rate of 
growth, defined next, 
of a minimal projective resolution. We then show that it is equal to the 
dimension of the support variety $V_{\C}(X)$ when \textbf{Fg} holds.

Let $a_{\bu} = (a_0,a_1,a_2,\ldots)$ be a sequence of nonnegative real numbers $a_i$.
The {\em rate of growth} $\gamma( a_{\bu})$ is defined to be
the smallest nonnegative integer $c$ for which there exists a real number $b$
such that $a_n \leq bn^{c-1}$ for all positive 
integers~$n$.
If no such $c$ exists, we define $\gamma(a_{\bu})$ to be $\infty$.
We will be interested in the rates of growth of sequences
$\dim_k W_{\bu}$ and $\FPdim P_{\bu}$ where $W_{\bu}$
is an $\N$-graded vector space over $k$ and $P_{\bu}$ is
an $\N$-graded object of $\C$.

Let $X$ be an object in $\C$.
The {\em complexity} of $X$ is defined to be the rate of growth
of a minimal projective resolution
$P_{\bu}$ of $X$ as measured by the Frobenius-Perron dimension:
\[
    \cx_{\C}(X) \stackrel{\text{def}}{=} \gamma ( \FPdim(P_{\bu})) .
\]
It follows from the proof of Theorem~\ref{thm:cx-dim} below
that our definition of complexity is equivalent to~\cite[Definition 4.1]{BKSS}. It also follows from the proof 
that all objects in a finite tensor
category satisfying condition \textbf{Fg} have finite complexity,
since the dimensions of the support varieties are necessarily finite. 
The proof of the theorem is in the same spirit as that for 
modules for finite group algebras~\cite[Proposition~5.7.2]{Benson2}.
However, we use Frobenius-Perron dimensions of objects
in place of vector space dimensions,
and exploit a connection with vector space dimension of Hom spaces.

\begin{theorem}\label{thm:cx-dim}
Let $\C$ be a finite tensor category satisfying condition \emph{\textbf{Fg}}.
For every object $X$ of $\C$, 
\[
   \cx_{\C}(X) = \dim  V_{\C}(X) \le \dim \Ho ( \C ),
\]
where $\dim \Ho ( \C )$ is the Krull dimension of $\Ho ( \C )$.
\end{theorem}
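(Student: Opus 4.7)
The plan is to connect the minimal projective resolution of $X$ to the graded modules $\Ext^*_{\C}(X,X_i)$ through the simple objects $X_1,\ldots,X_r$, and then invoke the standard correspondence between rate of growth of a finitely generated graded module and the Krull dimension of its annihilator quotient.

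First I would take a minimal projective resolution $P_{\bu}\to X$ and, using Lemma~\ref{lem:proj}, write each term as $P_n\cong\bigoplus_{i=1}^r a_{n,i}P(X_i)$ with $a_{n,i}=\dim_k\Hom_{\C}(P_n,X_i)$. Since $\FPdim(P_n)=\sum_{i=1}^r a_{n,i}\FPdim(P(X_i))$ and the $\FPdim(P(X_i))$ are fixed positive real numbers bounded above and below (there are finitely many), the rate of growth $\gamma(\FPdim(P_{\bu}))$ is squeezed between constant multiples of the rate of growth of $n\mapsto \sum_i a_{n,i}$; and the latter equals $\max_i\gamma(a_{n,\cdot})$ since a finite sum of nonnegative sequences grows like its slowest-decaying summand. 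Thus $\cx_{\C}(X)=\max_{i}\gamma\bigl(a_{n,i}\bigr)$.

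Next I would apply Lemma~\ref{lem:Ext-Hom}, which identifies $a_{n,i}=\dim_k\Hom_{\C}(P_n,X_i)=\dim_k\Ext^n_{\C}(X,X_i)$ for all $n\ge 1$. Hence
\[
\cx_{\C}(X)=\max_{i}\,\gamma\bigl(\dim_k\Ext^n_{\C}(X,X_i)\bigr).
\]
Under \textbf{Fg}, each $\Ext^*_{\C}(X,X_i)$ is a finitely generated graded $\Ho(\C)$-module (as noted just after the statement of \textbf{Fg} in the paper). By the standard fact recalled in the paragraph preceding the theorem (namely \cite[Theorem~5.4.6]{Benson2}, applied to $R=\Ho(\C)$), the rate of growth of a finitely generated graded $\Ho(\C)$-module equals the Krull dimension of $\Ho(\C)/\mathrm{Ann}$, which for $M=\Ext^*_{\C}(X,X_i)$ is $\Ho(\C)/I_{\C}(X,X_i)$. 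That Krull dimension is precisely $\dim V_{\C}(X,X_i)$.

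Finally I would combine with Proposition~\ref{prop:props}(iii), which gives $V_{\C}(X)=\bigcup_{i=1}^r V_{\C}(X,X_i)$, so $\dim V_{\C}(X)=\max_i\dim V_{\C}(X,X_i)$; stringing the equalities together yields $\cx_{\C}(X)=\dim V_{\C}(X)$. The inequality $\dim V_{\C}(X)\le\dim\Ho(\C)$ is immediate from the containment $V_{\C}(X)\subseteq\Maxspec\Ho(\C)$ (equivalently, $I_{\C}(X)\subseteq\Ho(\C)$). The only nonroutine step is ensuring that the rate-of-growth/Krull-dimension correspondence really applies to each $\Ext^*_{\C}(X,X_i)$, which rests entirely on \textbf{Fg} and the finite generation of $\Ext^*_{\C}(X,Y)$ for mixed arguments; once that is in hand, everything else is bookkeeping with finite maxima and the additive behaviour of $\FPdim$ on direct sums.
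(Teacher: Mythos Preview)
Your proof is correct and uses the same core ingredients (Lemma~\ref{lem:proj}, Lemma~\ref{lem:Ext-Hom}, and the rate-of-growth/Krull-dimension correspondence for finitely generated graded modules), but the logical route differs from the paper's in a meaningful way. The paper works directly with $\Ext^*_{\C}(X,X)$: it proves $\gamma(\FPdim(P_{\bu})) = \gamma(\dim_k\Ext^*_{\C}(X,X))$ via two separate inequalities---one using the simples and the fact that each $\Ext^*_{\C}(X,X_i)$ is finitely generated over $\Ext^*_{\C}(X,X)$, and the reverse using $\dim_k\Ext^n_{\C}(X,X)\le \dim_k\Hom_{\C}(P_n,X)=\sum_i a_{n,i}[X:X_i]$---and then identifies this with $\dim V_{\C}(X)$ via the annihilator $I_{\C}(X)$. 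You instead bypass $\Ext^*_{\C}(X,X)$ entirely: the two-sided squeeze $c\sum_i a_{n,i}\le \FPdim(P_n)\le C\sum_i a_{n,i}$ (with $c,C>0$ fixed) gives $\cx_{\C}(X)=\max_i\gamma(\dim_k\Ext^*_{\C}(X,X_i))=\max_i\dim V_{\C}(X,X_i)$ in one stroke, and then Proposition~\ref{prop:props}(iii) closes the argument. Your route is arguably cleaner, since the constants $\FPdim(P(X_i))$ bound both directions simultaneously, whereas the paper's reverse inequality needs a separate comparison of $[X:X_i]$ against $\FPdim(P(X_i))$; on the other hand, the paper's argument makes the role of the algebra $\Ext^*_{\C}(X,X)$ itself explicit. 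One minor slip: you write ``slowest-decaying summand'' where you mean ``fastest-growing,'' but the surrounding mathematics is correct.
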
 

\begin{proof}
By assumption \textbf{Fg}, $\Ext^*_{\C}(X,X)$ is a finitely generated module
over $\Ho (\C)$, and since the annihilator of this action is $I_{\C}(X)$
by definition, $\Ext^*_{\C}(X,X)$ is a finitely generated module
over $\Ho (\C)/I_{\C}(X)$. 
By definition, $\dim V_{\C}(X)$ is the Krull dimension of 
$\Ho (\C)/I_{\C}(X)$, which in turn is equal to its rate of growth as a
graded vector space,
so that we have 
\[
   \dim V_{\C}(X) = \dim ( \Ho (\C)/I_{\C}(X))  
  = \gamma (\dim_k \Ho (\C)/I_{\C}(X)) .
\]
The latter is equal to $\gamma (\dim_k \Ext^*_{\C}(X,X))$, 
since $\Ext^*_{\C}(X,X)$
is finitely generated as a module over the quotient $\Ho (\C)/I_{\C}(X)$.
Thus we must show that $\cx_{\C}(X) =\gamma(\dim_k \Ext^*_{\C}(X,X))$,
that is, we must show that
\[
   \gamma ( \FPdim (P_{\bu})) = \gamma (\dim_k \Ext^{\bu}_{\C}(X,X)) ,
\]
where $P_{\bu}$ is a minimal projective resolution of $X$ in $\C$. 
We will do this by proving that each quantity above
is less than or equal to the other. 

By Lemma~\ref{lem:proj},
the multiplicity of the projective cover $P(X_i)$ of a simple
object $X_i$ in $\C$, as a direct summand of $P_n$,
is $\dim_k \Hom_{\C}(P_n,X_i)$.
By Lemma~\ref{lem:Ext-Hom}, since $P_{\bu}$ is a minimal resolution,
\[
   \dim _k \Hom_{\C}(P_n,X_i) =\dim_k \Ext^n_{\C}(X,X_i) .
\]
We thus have 
\[
    \FPdim (P_n) = \sum_{i} \FPdim (P(X_i))
   \cdot\dim_k \Ext^n_{\C}(X,X_i) ,
\]
and so it follows that 
\[
  \gamma(\FPdim(P_{\bu})) \leq \max _i  \{ \gamma ( \dim_k \Ext^*_{\C}(X,X_i)) \} .
\]
Now condition \textbf{Fg} implies that each $\Ext^*_{\C}(X,X_i)$ is a finitely generated
$\Ho (\C)$-module, and since this action factors through 
$\Ext^*_{\C}(X,X)$ by the definition of the action, 
each $\Ext^*_{\C}(X,X_i)$ is finitely
generated as a module over $\Ext^*_{\C}(X,X)$. Thus 
\[
   \gamma(\dim_k \Ext^*_{\C}(X,X_i))\leq \gamma(\dim_k \Ext^*_{\C}(X,X))
\]
for each simple object $X_i$ in $\C$. 
It now follows from the two inequalities above that
\[\gamma ( \FPdim (P_{\bu})) \leq \gamma (\dim_k \Ext^{\bu}_{\C}(X,X)).\]

It remains to prove the reverse inequality. Since $\Ext^n_{\C}(X,X)$ is a subquotient of the vector
space $\Hom_{\C}(P_n,X)$, we have 
$
   \dim_k \Ext^n_{\C}(X,X) \leq \dim_k \Hom_{\C}(P_n,X) ,
$
and so
\[
    \gamma( \dim_k \Ext^{*}_{\C}(X,X)) \leq
   \gamma ( \dim_k \Hom_{\C}(P_{\bu},X)) .
\]
We claim that $\gamma (\dim_k \Hom_{\C}(P_{\bu},X))
\leq \gamma (\FPdim(P_{\bu}))$.
To see this, by Lemma~\ref{lem:proj}, we may 
write $P_n = \oplus _ i a_{n,i}P(X_i)$ for some nonnegative
integers $a_{n,i}$, 
where the $X_i$ are the simple objects.
By additivity of Hom and equation~(\ref{eqn:dim-Hom}) from Section \ref{sec:prelim},
\[
    \dim_k \Hom_{\C}(P_n,X)    
    =\sum_i a_{n,i} \dim_k \Hom_{\C}(P(X_i),X) 
   = \sum_i a_{n,i} [X:X_i],
\]
where $[X:X_i]$ is the multiplicity of $X_i$ as a composition factor
of $X$.
In addition, 
\[
   \FPdim(P_n) = \sum _i a_{n,i} \FPdim(P(X_i)) .
\]
Comparing rates of growth,  we now must show that 
\[
   \gamma( \sum_i a_{\bu,i} [X:X_i]) \leq \gamma(\sum_ia_{\bu,i}
   \FPdim(P(X_i))) .
\]
To see that this is indeed the case, note that for each $i$, 
the quantities $[X:X_i]$ and $\FPdim(P(X_i))$ are
{\em fixed} real numbers.
Moreover, $\FPdim(P(X_i))$ is positive for all $i$,
as explained in Section~\ref{sec:prelim}.  Therefore, in each expression, the rate of growth depends on 
the integers $a_{\bu,i}$.
The expression on the left side only depends on those $a_{\bu,i}$ 
for which $[X:X_i]$ is nonzero, and the expression on the right side depends on
all $a_{\bu,i}$, since $\FPdim(P(X_i))$ is nonzero for all $i$. Consequently, the inequality above holds. 
It follows that 
$\gamma(\dim_k\Hom_{\C}(P_{\bu},X)) \leq \gamma( \FPdim (P_{\bu}))$
as claimed,
and therefore 
$\gamma(\dim_k\Ext^*_{\C}(X,X)) 
  \leq   \gamma(\FPdim_k(P_{\bu})) .$
This shows that
\[
  \gamma(\FPdim(P_{\bu})) = \gamma (\dim_k \Ext^*_{\C}(X,X)) ,
\]
as required.
\end{proof}

As a consequence of the theorem, we obtain an expected result 
on the variety of a projective object. 

\begin{corollary}\label{cor:zero}
Let $\C$ be a finite tensor category satisfying condition \emph{\textbf{Fg}}.
An object $X$ of $\C$ is projective if and only if $\dim V_{\C}(X)=0$.
\end{corollary}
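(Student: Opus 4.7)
The plan is to reduce everything to Theorem~\ref{thm:cx-dim} and then exploit the fact that $\C$ is quasi-Frobenius (so projectives coincide with injectives).

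For the easy direction, if $X$ is projective, then its minimal projective resolution is $\cdots\to 0\to 0\to X \to X\to 0$, so $\FPdim(P_n)=0$ for $n\geq 1$; hence $\cx_{\C}(X)=\gamma(\FPdim(P_{\bu}))=0$, and Theorem~\ref{thm:cx-dim} gives $\dim V_{\C}(X)=0$.

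For the converse, suppose $\dim V_{\C}(X)=0$. By Theorem~\ref{thm:cx-dim}, $\gamma(\FPdim(P_{\bu}))=0$, where $P_{\bu}$ is a minimal projective resolution of $X$. Unpacking the definition of rate of growth with $c=0$, there is a constant $b>0$ such that $\FPdim(P_n)\leq b/n$ for all $n\geq 1$. Now recall that $\FPdim(Y)=\sum_i [Y:X_i]\FPdim(X_i)$ and that $\FPdim(X_i)\geq 1$ for each simple object, so every nonzero object of $\C$ has Frobenius--Perron dimension at least $1$. The inequality $\FPdim(P_n)\leq b/n$ therefore forces $P_n=0$ once $n>b$, and minimality then yields $\Omega_{\C}^n(X)=0$ for some $n$.

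It remains to climb back down the resolution to conclude that $X$ itself is projective. Since $P_{n-1}$ is the projective cover of $\Omega_{\C}^{n-1}(X)$ and the kernel $\Omega_{\C}^n(X)$ vanishes, we have $\Omega_{\C}^{n-1}(X)\cong P_{n-1}$, which is projective. Projective objects coincide with injective objects in the finite tensor category $\C$ (this is the quasi-Frobenius property recalled in Section~\ref{sec:prelim}), so each short exact sequence
\[
  0\longrightarrow \Omega_{\C}^{k+1}(X)\longrightarrow P_k\longrightarrow \Omega_{\C}^{k}(X)\longrightarrow 0
\]
with projective left term splits (by Lemma~\ref{lem:split}), exhibiting $\Omega_{\C}^k(X)$ as a direct summand of the projective $P_k$. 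A downward induction from $k=n-1$ to $k=0$ therefore shows that $X$ is projective. The only delicate point is the passage from $\gamma(\FPdim(P_{\bu}))=0$ to the eventual vanishing of $P_n$, where positivity of the Frobenius--Perron dimension on nonzero objects is the crucial ingredient; once that is in hand, the rest is standard quasi-Frobenius bookkeeping.
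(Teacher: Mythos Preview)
Your proof is correct and follows essentially the same route as the paper: both directions go through Theorem~\ref{thm:cx-dim}, and the converse uses the quasi-Frobenius property to split the resolution from the top down once it is known to terminate. Your argument is in fact more careful than the paper's on one point: you explain why $\gamma(\FPdim(P_\bu))=0$ forces $P_n=0$ eventually (via the lower bound $\FPdim\geq 1$ on nonzero objects), whereas the paper simply asserts that complexity zero yields a finite resolution. One small quibble: the citation of Lemma~\ref{lem:split} is unnecessary, since the splitting of $0\to\Omega_{\C}^{k+1}(X)\to P_k\to\Omega_{\C}^k(X)\to 0$ when the left term is injective follows directly from the definition of injectivity.
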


\begin{proof}
If $X$ is projective, then $\cx_{\C}(X) =0$ and so $\dim V_{\C}(X)=0$
by Theorem~\ref{thm:cx-dim}.
Conversely, if $\dim V_{\C}(X) =0$, then $X$ has a projective
resolution of finite length, say $0\rightarrow P_n\rightarrow\cdots
\rightarrow P_0\rightarrow X\rightarrow 0$. 
By~\cite[Proposition 6.1.3]{EGNO}, projective objects are also injective,
and so the morphism $P_n\rightarrow P_{n-1}$ splits, and similarly
for the other morphisms in the resolution.
This implies that $X$ is a
direct summand of $P_0$, and so is projective.
\end{proof}

We illustrate the notion of complexity and the theorem with an example
of Benson and Etingof~\cite{BE}. 

\begin{example}
Let $\C = \C_3$, the category defined in~\cite[Subsection 5.2.3]{BE}.
In \emph{loc.\ cit}., it is shown that this finite tensor category has two simple objects, $\unit$ and $V$,
with $\FPdim(\unit) =1$ and $\FPdim(V) = \sqrt{2}$.
Their projective covers $P(\unit)$ and $P(V)$ have 
Frobenius-Perron dimensions $\FPdim(P(\unit)) = 3+\sqrt{2}$ and 
$\FPdim(P(V))= 2 + 2\sqrt{2}$. 
It is also shown in~\cite{BE} that 
\[
  \Ext^*_{\C_3}(\unit,\unit) \cong k[x,y,z]/(y^2+xz) \ \ \ 
  \text{ and } \ \ \ \Ext^*_{\C_3}(V,V)\cong k[u,v]/(u^2) 
\]
where $|x|=1$, $|y|=|u|=2$, $|z|=|v|=3$, and
the minimal projective resolution of $V$ is 
\[
  \cdots\rightarrow P(V)\rightarrow P(\unit) \rightarrow P(V)
  \rightarrow P(V)\rightarrow P(\unit)\rightarrow P(V)\rightarrow V
  \rightarrow 0 .
\]
It follows that $\cx_{\C_3}(V)=1$ and $\cx_{\C_3}(\unit)=2$.
See~\cite{BE} for more details on the structure of this category.
\end{example}

\section{Carlson's $L_{\zeta}$ objects}\label{sec:carlson}

In this section, to each homogeneous element $\zeta$ of
the cohomology ring $\coh^*(\C)$ of the finite tensor category $\C$,
we associate an object $L_{\zeta}$ of $\C$.
These objects are defined analogously to Carlson's 
$L_{\zeta}$ modules for finite groups
and to Koszul objects in triangulated categories (see, e.g.~\cite{BKSS}),
and have similar useful properties.
We give a somewhat different approach to that in~\cite{BKSS},
and include proofs for completeness.

Let $n >0$.
By Lemma~\ref{lem:Ext-Hom}, since the unit object $\unit$ is simple,  
\[
   \coh^n(\C) = \Ext^n_{\C}(\unit, \unit) \cong \Hom_{\C} (\Omega^n_{\C}(\unit),\unit).
\]
Let $\zeta\in\coh^n(\C)$, and identify $\zeta$ with a morphism
$\hat{\zeta}$ from $\Omega^n_{\C}(\unit)$ to $\unit$ under the
above isomorphism.
Let $L_{\zeta}$ be its kernel, so that $L_{\zeta}$ is defined
by a short exact sequence:
\begin{equation*}\label{eqn:ell-zeta}
   0\rightarrow L_{\zeta} \rightarrow \Omega^n_{\C}(\unit)
   \stackrel{\hat{\zeta}}{\longrightarrow} \unit \rightarrow 0 . \tag{$\dagger \dagger$}
\end{equation*}
We shall prove that for every object $X$, the support variety $\VC (L_{\zeta} \ot X )$ is contained in $\VC (X)\cap Z( \zeta )$, with equality when condition \textbf{Fg} holds. This result generalizes~\cite[Proposition~3]{Pevtsova-W}
and~\cite[Theorem~2.5]{Feldvoss-W}, and parallels~\cite[Proposition 3.6]{BKSS};
our proof is essentially that in~\cite{Pevtsova-W}. For the proof we give, we need the following elementary lemma.

\begin{lemma}\label{lem:localization}
Let $R$ be a positively graded commutative ring, and $M$ a positively graded $R$-module. Furthermore, let $\mathfrak{p}$ be a prime ideal of $R$ not containing $R_+$, and consider the graded submodule $M_{\ge n}$ of $M$ for some $n \ge 0$. Then $( M_{\ge n} )_{\mathfrak{p}} = M_{\mathfrak{p}}$.
\end{lemma}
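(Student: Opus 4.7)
The plan is to recast the desired equality as the vanishing of a quotient. Consider the short exact sequence of graded $R$-modules
\[
0 \to M_{\ge n} \to M \to M/M_{\ge n} \to 0
\]
and localize at $\mathfrak{p}$. Since localization is exact, the induced map $(M_{\ge n})_{\mathfrak{p}} \to M_{\mathfrak{p}}$ is injective, and the equality $(M_{\ge n})_{\mathfrak{p}} = M_{\mathfrak{p}}$ is equivalent to the vanishing $(M/M_{\ge n})_{\mathfrak{p}} = 0$. The quotient $N := M/M_{\ge n}$ is concentrated in degrees strictly less than $n$, so the problem reduces to showing that this finitely-truncated graded module is killed after localizing at $\mathfrak{p}$.

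To produce such a killing, I would exploit the hypothesis that $\mathfrak{p} \not\supseteq R_+$: pick a homogeneous element $r \in R_+$ of some positive degree $d$ with $r \notin \mathfrak{p}$. For any homogeneous element of $N$, multiplication by $r^k$ raises the degree by $kd$, and as soon as $kd \ge n$ the product lands in a degree where $N$ vanishes. A general element of $N$ is a finite sum of homogeneous pieces of degrees below $n$, so a single sufficiently large $k$ annihilates all of $N$ at once. Since $r \notin \mathfrak{p}$ and $\mathfrak{p}$ is prime, $r^k \notin \mathfrak{p}$, hence $r^k$ acts invertibly on $N_{\mathfrak{p}}$; combined with $r^k \cdot N = 0$ this forces $N_{\mathfrak{p}} = 0$, completing the proof. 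There is no genuine obstacle here — the statement is essentially the graded version of the fact that modules supported only at the irrelevant ideal vanish away from it; the only thing to track carefully is that the truncation depth $n$ can be cleared by a single power of one homogeneous unit outside $\mathfrak{p}$.
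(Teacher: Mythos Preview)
Your proof is correct and uses essentially the same idea as the paper: pick a homogeneous element of $R_+$ outside $\mathfrak{p}$ and use a sufficiently high power of it to push low-degree elements into $M_{\ge n}$. The paper phrases this elementwise, rewriting a fraction $m/s$ with $m$ in degrees below $n$ as $t^n m / t^n s \in (M_{\ge n})_{\mathfrak{p}}$, while you package the same computation as the vanishing of $(M/M_{\ge n})_{\mathfrak{p}}$ via an annihilating element outside $\mathfrak{p}$; the content is the same.
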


\begin{proof}
If $n > 0$, take an element $m/s$ in $M_{\mathfrak{p}}$ with $m \in \oplus_{i=0}^{n-1}M_i$ and $s \in R \setminus \mathfrak{p}$. Since $\mathfrak{p}$ does not contain $R_+$, there is a homogeneous element $t \in R_+$ with $t \notin \mathfrak{p}$, and so $m/s = t^jm/t^js$ in $M_{\mathfrak{p}}$ for all $j \ge 0$. The element $t^nm$ belongs to $M_{\ge n}$, hence $m/s \in ( M_{\ge n} )_{\mathfrak{p}}$.
\end{proof}

Now we prove the main result of this section. In the proof, we use the fact that when \textbf{Fg} holds, then  
\[
    \m \in \VC (X,Y) \Longleftrightarrow
   \Ext^*_{\C}(X,Y)_{\m} \neq 0
\]
for all objects $X,Y$ and every maximal ideal $\m \in \Ho ( \C )$, where the subscript $\m$ denotes localization.

\begin{theorem}\label{thm:ell-zeta}
Let $\C$ be a finite tensor category, $X$ an object in $\C$, and $\zeta\in \Ho (\C)$ a nonzero homogeneous element of positive degree.
Then 
\[
  \VC (L_{\zeta} \ot X ) \subseteq \VC (X)\cap Z( \zeta ) ,
\]
and equality holds if $\C$ satisfies condition \emph{\textbf{Fg}}.
In particular, when this is the case, then $\VC (L_{\zeta}) = Z( \zeta )$.
\end{theorem}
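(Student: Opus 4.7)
The plan is to tensor the defining sequence of $L_{\zeta}$ with $X$, which by biexactness yields the short exact sequence
\[
   0 \to L_{\zeta} \ot X \to \Omega^n_{\C}(\unit) \ot X \to X \to 0 ,
\]
and then to exploit the stable isomorphism $\Omega^n_{\C}(\unit) \ot X \cong \Omega^n_{\C}(X)$ noted in Section~\ref{sec:prelim}. The containment $\VC(L_{\zeta} \ot X) \subseteq \VC(X)$ will then be immediate from Proposition~\ref{prop:props}(iv) and~(vi), since the stable isomorphism gives $\VC(\Omega^n_{\C}(\unit) \ot X) = \VC(X)$.

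To obtain $\VC(L_{\zeta} \ot X) \subseteq Z(\zeta)$, I will apply $\Hom_{\C}(-, L_{\zeta} \ot X)$ to this sequence and use the dimension-shift identification $\Ext^m_{\C}(\Omega^n_{\C}(\unit) \ot X, L_{\zeta} \ot X) \cong \Ext^{m+n}_{\C}(X, L_{\zeta} \ot X)$, under which pullback along $\hat{\zeta} \ot \id_X$ becomes Yoneda multiplication by $\varphi_X(\zeta)$. The long exact Ext sequence then takes the form
\[
\cdots \to \Ext^m_{\C}(X, L_{\zeta} \ot X) \xrightarrow{\cdot \zeta} \Ext^{m+n}_{\C}(X, L_{\zeta} \ot X) \xrightarrow{\nu} \Ext^m_{\C}(L_{\zeta} \ot X, L_{\zeta} \ot X) \xrightarrow{\partial} \Ext^{m+1}_{\C}(X, L_{\zeta} \ot X) \xrightarrow{\cdot \zeta} \cdots .
\]
A short diagram chase will then show that $\zeta^2$ annihilates $\Ext^*_{\C}(L_{\zeta} \ot X, L_{\zeta} \ot X)$: given $\alpha$ in degree $m$, exactness forces $\zeta \cdot \partial(\alpha) = 0$, so $\zeta \alpha \in \Ima(\nu)$; writing $\zeta \alpha = \nu(\gamma)$ and reapplying $\zeta$, exactness at the middle spot yields $\zeta^2 \alpha = \nu(\zeta \gamma) = 0$. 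Hence $\zeta \in \sqrt{I_{\C}(L_{\zeta} \ot X)}$, and so $\VC(L_{\zeta} \ot X) \subseteq Z(\zeta)$.

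For the reverse inclusion under \textbf{Fg}, I will show that every $\m \in \VC(X) \cap Z(\zeta)$ with $\m \ne \m_0$ lies in $\VC(L_{\zeta} \ot X, X)$; by Lemma~\ref{lem:ideals}(i) this will give $\m \in \VC(L_{\zeta} \ot X)$. Running the same long exact sequence argument but with $Y = X$, I assume toward a contradiction that $\Ext^*_{\C}(L_{\zeta} \ot X, X)_{\m} = 0$. Vanishing of the middle terms forces the map $\cdot \zeta : \Ext^m_{\C}(X, X)_{\m} \to \Ext^{m+n}_{\C}(X, X)_{\m}$ to be surjective for all sufficiently large $m$. Writing $E = \Ext^*_{\C}(X, X)$, this gives $E_{\ge N} \subseteq \zeta E$ for some $N$. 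Since $\m \ne \m_0$ does not contain $\Ho(\C)_+$, Lemma~\ref{lem:localization} yields $(E_{\ge N})_{\m} = E_{\m}$, hence $E_{\m} = \zeta E_{\m}$. Because $E$ is finitely generated over $\Ho(\C)$ by \textbf{Fg}, $E_{\m}$ is finitely generated over the local ring $\Ho(\C)_{\m}$, and $\zeta$ lies in its maximal ideal; Nakayama's lemma then forces $E_{\m} = 0$, contradicting $\m \in \VC(X)$. The final assertion $\VC(L_{\zeta}) = Z(\zeta)$ is the special case $X = \unit$, since $L_{\zeta} \ot \unit = L_{\zeta}$ and $\VC(\unit) = \VC$.

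The main technical obstacle I foresee is the careful bookkeeping needed to verify that the dimension-shift identification turns pullback along $\hat{\zeta} \ot \id_X$ into multiplication by $\zeta$, and that the resulting identification is natural enough for the connecting map $\partial$ to be $\Ho(\C)$-linear up to sign so that $\partial(\zeta \alpha) = \pm \zeta \partial(\alpha)$. Once this is in place, both the $\zeta^2$-annihilation computation and the Nakayama-plus-localization argument are routine.
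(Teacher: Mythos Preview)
Your approach is correct and close in spirit to the paper's, but there is one genuine organizational difference worth noting.

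For the inclusion $\VC(L_{\zeta}\ot X)\subseteq Z(\zeta)$, the paper does not run the $\zeta^2$-annihilation chase on the tensored sequence. Instead it first invokes Proposition~\ref{prop:props}(v) to obtain $\VC(L_{\zeta}\ot X)\subseteq \VC(L_{\zeta})$, and then reduces the remaining task $\VC(L_{\zeta})\subseteq Z(\zeta)$ to the case of simple second arguments via Proposition~\ref{prop:props}(iii): it applies $\Hom_{\C}(-,X_i)$ to the \emph{untensored} defining sequence~(\ref{eqn:ell-zeta}) and uses Lemma~\ref{lem:Ext-Hom} to identify the map with multiplication by $\zeta$. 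Similarly, for the reverse inclusion the paper shows $\VC(X,X_i)\cap Z(\zeta)\subseteq \VC(L_{\zeta}\ot X,X_i)$ for each simple $X_i$, applying $\Hom_{\C}(-,X_i)$ to the tensored sequence, whereas you take the second argument to be $X$ itself and pass through $\VC(L_{\zeta}\ot X,X)\subseteq \VC(L_{\zeta}\ot X)$ using Proposition~\ref{prop:props}(ii).

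The trade-off is exactly the one you anticipate in your final paragraph. By mapping into simples, the paper gets the clean identification $\Ext^m_{\C}(\Omega^n_{\C}(\unit),X_i)\cong\Ext^{m+n}_{\C}(\unit,X_i)$ directly from Lemma~\ref{lem:Ext-Hom}, with no projective correction terms and no need to argue that the connecting maps are $\Ho(\C)$-linear up to sign. Your route is more self-contained (one sequence, one second argument, no reduction to simples), but you must carry the stable-isomorphism bookkeeping for $\Omega^n_{\C}(\unit)\ot X$ and verify the compatibility of the dimension shift with the $\Ho(\C)$-action. Once that is done, your $\zeta^2$-chase and the localization--Nakayama step are identical to the paper's.
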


\begin{proof}
We first show that $\VC ( L_{\zeta} \ot X)\subseteq
\VC (X)\cap Z( \zeta )$. Denote the degree of $\zeta$ by $n$, and consider the short exact sequence~(\ref{eqn:ell-zeta}) defining the object $L_{\zeta}$. When we apply $- \ot X$ to it, we obtain a short exact sequence
$$0 \to L_{\zeta} \ot X \to \Omega_{\C}^n(X) \oplus P \to X \to 0$$
where $P$ is some projective object. It follows from properties (iv) and (vi) of Proposition~\ref{prop:props} that $\VC ( L_{\zeta} \ot X)\subseteq \VC (X)$. Furthermore, using Proposition~\ref{prop:props}(v) directly, we see that $\VC ( L_{\zeta} \ot X)\subseteq \VC(L_{\zeta})$, and so $\VC ( L_{\zeta} \ot X)\subseteq \VC (X) \cap \VC(L_{\zeta})$. We therefore need to show that $\VC(L_{\zeta})\subseteq Z( \zeta )$, and to do this it suffices to show
that $\VC(L_{\zeta},X_i)\subseteq Z( \zeta )$ for
each simple object $X_i$, by Proposition~\ref{prop:props}(iii).

By applying $\Hom_{\C}(-,X_i)$ to the short exact sequence ~(\ref{eqn:ell-zeta}), we obtain an exact sequence
$$\Ext^*_{\C}( \unit, X_i) \xrightarrow{\cdot \zeta} \Ext^{\ge n}_{\C}( \unit, X_i) \xrightarrow{f} \Ext^*_{\C}(L_{\zeta}, X_i) \xrightarrow{g} \Ext^{\ge 1}_{\C}( \unit, X_i) \xrightarrow{\cdot \zeta} \Ext^{\ge n+1}_{\C}( \unit, X_i)$$
of graded $\Ho (\C)$-modules, where we have used Lemma~\ref{lem:Ext-Hom}. Now take an element $\mu \in \Ext^*_{\C}(L_{\zeta}, X_i)$. Since
$$0 = \zeta \cdot g( \mu ) = g ( \zeta \cdot \mu ),$$
the element $\zeta \cdot \mu$ must belong to the image of the map $f$, that is, $\zeta \cdot \mu = f ( \theta )$ for some $\theta \in \Ext^{\ge n}_{\C}( \unit, X_i)$. This gives
$$\zeta^2 \cdot \mu = \zeta \cdot f ( \theta ) = f ( \zeta \cdot \theta ) = 0,$$
showing that $\zeta^2$ belongs to the annihilator ideal $I_{\C}(L_{\zeta}, X_i)$ of $\Ext^*_{\C}(L_{\zeta}, X_i)$ in $\Ho (\C)$. Now if $\m \in \VC(L_{\zeta},X_i)$, then by definition $I_{\C}(L_{\zeta}, X_i) \subseteq \m$, hence $\zeta^2 \in \m$. As $\m$ is a prime ideal, $\zeta$ must belong to $\m$, and therefore $\m \in Z( \zeta )$. This shows that $\VC(L_{\zeta},X_i)\subseteq Z( \zeta )$, and so we have proved the inclusion $\VC ( L_{\zeta} \ot X)\subseteq
\VC (X)\cap Z( \zeta )$.

For the reverse inclusion, suppose that \textbf{Fg} holds for $\C$. Again, using Proposition~\ref{prop:props}(iii), it suffices to show that
\[
   \VC (X,X_i)\cap Z( \zeta ) \subseteq
   \VC ( L_{\zeta} \ot X , X_i) 
\]
for every simple object $X_i$. Let therefore $\m$ be a maximal ideal of $\Ho ( \C )$ with $\m \notin \VC (L_{\zeta} \ot X,X_i )$. In particular, $\m \neq \m_0$, since $\m_0 \in \VC (L_{\zeta} \ot X,X_i )$ by definition. Moreover, since \textbf{Fg} holds, the localization $\Ext^*_{\C}(L_{\zeta} \ot X, X_i)_{\m}$ is zero.

Now apply $\Hom_{\C}(-,X_i)$ to the short exact sequence from the beginning of the proof, and obtain an exact sequence 
$$\Sigma^{-1} \Ext^*_{\C}(L_{\zeta} \ot X, X_i) \to \Ext^*_{\C}( X, X_i) \xrightarrow{\cdot \zeta} \Ext^{\ge n}_{\C}( X, X_i) \to \Ext^*_{\C}(L_{\zeta} \ot X, X_i)$$
of graded $\Ho (\C)$-modules, where we have used Lemma~\ref{lem:Ext-Hom} again. Here $\Sigma^{-1} \Ext^*_{\C}(L_{\zeta} \ot X, X_i)$ denotes the graded $\Ho (\C)$-module shifted in degree $-1$. The sequence remains exact when we localize at $\m$, and so since $\Ext^*_{\C}(L_{\zeta} \ot X, X_i)_{\m} =0$, the multiplication map
$$\Ext^*_{\C}( X, X_i)_{\m} \xrightarrow{\cdot \zeta} \Ext^{\ge n}_{\C}( X, X_i)_{\m}$$
is an isomorphism. It follows from Lemma \ref{lem:localization} that $\Ext^*_{\C}( X, X_i)_{\m} = \zeta \Ext^*_{\C}( X, X_i)_{\m}$. If $\m \in Z( \zeta )$, then this last equality implies that $\Ext^*_{\C}( X, X_i)_{\m} =0$ by Nakayama's lemma, since $\Ext^*_{\C}( X, X_i)_{\m} $ is a finitely generated $\Ho (\C)_{\m}$-module. But then $\m$ does not contain the annihilator ideal $I_{\C}(X,X_i)$ of $\Ext^*_{\C}( X, X_i)$ in $\Ho (\C)$, that is, $\m$ is not contained in $\VC (X,X_i)$. This shows that $\VC (X,X_i)\cap Z( \zeta ) \subseteq  \VC ( L_{\zeta} \ot X , X_i)$, and so we have proved the inclusion $\VC (X)\cap Z( \zeta ) \subseteq \VC ( L_{\zeta} \ot X)$.

Finally, note that the last statement of the theorem follows from the equality $\VC ( L_{\zeta} \ot X) =
\VC (X)\cap Z( \zeta )$, by setting $X = \unit$.
\end{proof}

We obtain as a consequence the following realization result.
Recall that a conical variety is by definition the
zero set of an ideal generated by homogeneous elements.

\begin{corollary}\label{cor:realization}
Let $\C$ be a finite tensor category satisfying condition \emph{\textbf{Fg}}, and $V$ any nonempty conical subvariety of $\VC$.
Then $V = \VC(X)$ for some object $X$ of $\C$.
\end{corollary}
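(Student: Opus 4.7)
The plan is to realize $V$ as a finite intersection $Z(\zeta_1) \cap \cdots \cap Z(\zeta_m)$ of zero sets of homogeneous elements of $\Ho(\C)$, and then produce a single object whose variety is this intersection by tensoring the Carlson objects $L_{\zeta_i}$ together and invoking Theorem~\ref{thm:ell-zeta} repeatedly.

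First I would unpack what a conical subvariety is in this setting. Since $\VC = \Maxspec \Ho(\C)$ and $V$ is a nonempty conical subvariety, there is a homogeneous ideal $\az \subseteq \Ho(\C)$ with $V = Z(\az)$. Under \textbf{Fg} the ring $\Ho(\C)$ is finitely generated over $k$, hence Noetherian, so $\az$ admits a finite set of homogeneous generators. Because $V \neq \emptyset$ the ideal $\az$ is proper, and because the degree-zero part of $\Ho(\C)$ is $k$ (the unit object being simple), any nonzero degree-zero element of $\az$ would generate the unit ideal. Hence the homogeneous generators $\zeta_1, \dots, \zeta_m$ of $\az$ can all be taken of strictly positive degree. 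If $m = 0$, so $\az = 0$ and $V = \VC$, one simply takes $X = \unit$, whose variety is $\VC$ by definition.

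In the remaining case $m \geq 1$, set $X = L_{\zeta_1} \otimes L_{\zeta_2} \otimes \cdots \otimes L_{\zeta_m}$ and prove $\VC(X) = V$ by induction on $m$. The base case $m = 1$ is exactly the last sentence of Theorem~\ref{thm:ell-zeta}, which gives $\VC(L_{\zeta_1}) = Z(\zeta_1)$. For the inductive step, write $X = L_{\zeta_1} \otimes Y$ with $Y = L_{\zeta_2} \otimes \cdots \otimes L_{\zeta_m}$; by the inductive hypothesis $\VC(Y) = Z(\zeta_2) \cap \cdots \cap Z(\zeta_m)$, and the equality case of Theorem~\ref{thm:ell-zeta}, available since \textbf{Fg} holds, yields
\[
   \VC(X) = \VC(L_{\zeta_1} \otimes Y) = \VC(Y) \cap Z(\zeta_1) = Z(\zeta_1) \cap \cdots \cap Z(\zeta_m) = V .
\]
Since essentially all the substantive content has already been absorbed into Theorem~\ref{thm:ell-zeta}, I do not expect a genuine obstacle here; the only step that requires any thought is the preliminary reduction to positive-degree generators, and that is entirely a consequence of the degree-zero part of $\Ho(\C)$ being the ground field.
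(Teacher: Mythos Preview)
Your proposal is correct and follows essentially the same approach as the paper: write $V = Z(\az)$ for a homogeneous ideal, use Noetherianity from \textbf{Fg} to get finitely many homogeneous positive-degree generators $\zeta_1,\dots,\zeta_m$, and take $X = L_{\zeta_1}\ot\cdots\ot L_{\zeta_m}$, applying Theorem~\ref{thm:ell-zeta} iteratively. Your version is slightly more explicit about the edge case $\az = 0$ and about why the generators may be taken in positive degree, but the argument is the same.
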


\begin{proof}
By definition, $V = Z(I)$ for some homogeneous proper ideal $I$ of $\Ho (\C)$. Since $\Ho (\C)$ is Noetherian, this ideal is finitely generated, and so $I = \langle \zeta_1,\ldots, \zeta_t \rangle$ for some homogeneous elements $\zeta_1,\ldots,\zeta_t$ of positive degrees.
Let $X = L_{\zeta_1}\ot \cdots\ot L_{\zeta_t}$.
By Theorem~\ref{thm:ell-zeta}, $\VC(X) = Z(I) = V$.
\end{proof}

The following corollary shows that for every integer $c$ with $0 \le c \le \dim \Ho (\C)$, there exists an object $X \in \C$ of complexity $c$. Note that by Theorem~\ref{thm:cx-dim}, the complexity of every object in $\C$ is at most $\dim \Ho (\C)$. Hence every possible ``allowed'' complexity is realized by some object.

\begin{corollary}\label{cor:allcomplexities}
Let $\C$ be a finite tensor category satisfying condition \emph{\textbf{Fg}}, and $c$ an integer with $0 \le c \le \dim \Ho (\C)$, where $\dim \Ho (\C)$ denotes the Krull dimension of $\Ho (\C)$. Then there exists an object $X \in \C$ with $\cx_{\C}(X) = \dim \VC (X) = c$.
\end{corollary}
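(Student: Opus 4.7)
The plan is to combine the realization result (Corollary \ref{cor:realization}) with the equality $\cx_{\C}(X) = \dim \VC(X)$ from Theorem \ref{thm:cx-dim}, which reduces the task to producing, for each integer $c$ with $0 \le c \le d := \dim \Ho(\C)$, a nonempty conical subvariety of $\VC$ of dimension exactly $c$. This becomes a purely commutative-algebraic problem about the finitely generated graded $k$-algebra $R := \Ho(\C)$, which satisfies $R_0 = k$ by graded-commutativity and the fact that $\unit$ is simple.

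First I would apply graded Noether normalization to $R$: there exist algebraically independent homogeneous elements $y_1, \ldots, y_d \in R$ of positive degree such that $R$ is finitely generated as a module over the polynomial subring $k[y_1, \ldots, y_d]$. For each $c$ with $0 \le c < d$, set $I_c := (y_1, \ldots, y_{d-c}) R$, a proper homogeneous ideal. Then $R/I_c$ is a finitely generated module over $k[y_1, \ldots, y_d]/(y_1, \ldots, y_{d-c}) \cong k[y_{d-c+1}, \ldots, y_d]$, and invariance of Krull dimension under finite integral extensions forces $\dim R/I_c = c$. Hence $Z(I_c)$ is a nonempty conical subvariety of $\VC$ of dimension $c$, and Corollary \ref{cor:realization} (realized concretely, e.g., by $X_c := L_{y_1} \ot \cdots \ot L_{y_{d-c}}$) provides an object $X_c \in \C$ with $\VC(X_c) = Z(I_c)$. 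Theorem \ref{thm:cx-dim} then yields $\cx_{\C}(X_c) = \dim \VC(X_c) = c$. For the remaining boundary case $c = d$, I would simply take $X = \unit$: the $R$-action on $\Ext^*_{\C}(\unit, \unit) = \Coh^*(\C)$ is by multiplication in $\Coh^*(\C)$, so the annihilator $I_{\C}(\unit)$ is zero and $\VC(\unit) = \VC$ has dimension $d$.

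The only substantive step is the dimension computation in the middle paragraph, which rests on graded Noether normalization and the invariance of Krull dimension under finite integral extensions. Both are standard commutative algebra, so I do not expect any significant obstacle; the proof is essentially a repackaging of the realization and complexity theorems already established, together with one clean application of Noether normalization to exhibit ideals of each prescribed codimension.
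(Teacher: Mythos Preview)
Your proposal is correct and is precisely the argument the paper leaves implicit: the corollary is stated without proof, immediately after Corollary~\ref{cor:realization}, with the evident intention that one realizes a conical subvariety of each dimension $c$ and then invokes Theorem~\ref{thm:cx-dim}. Your use of graded Noether normalization to produce such subvarieties is the standard way to fill in this step; the dimension equality $\dim R/(y_1,\dots,y_{d-c}) = c$ holds because $y_1,\dots,y_d$ form a homogeneous system of parameters for $R$ (the upper bound comes from finiteness over $k[y_{d-c+1},\dots,y_d]$, the lower bound from Krull's principal ideal theorem applied $d-c$ times).
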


We end this section with a couple of general results involving the objects $L_{\zeta}$, results that do not require the \textbf{Fg} condition. The first one gives a necessary and sufficient condition for a homogeneous element in the cohomology ring $\Coh^*(\C)$ to annihilate the cohomology ring of an object. We will use this in the proof of the main theorem of Section~\ref{sec:indecomposable}. We do not actually need the ``sufficient'' part of this result; however, we include it for completeness and for possible future reference. 

\begin{proposition}\label{prop:zero}
Let $\C$ be a finite tensor category, $X \in \C$ an object, and $\zeta$ a nonzero element in $\Coh^n ( \C )$ for some $n \ge 1$. Then $\varphi_X ( \zeta ) = 0$ in $\Ext_{\C}^*(X,X)$ if and only if $\Omega_{\C}^{-1} ( L_{\zeta} ) \otimes X$ is stably isomorphic to $X \oplus \Omega_{\C}^{n-1}(X)$.
\end{proposition}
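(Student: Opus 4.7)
I would work in the stable category of $\C$, which is a Krull--Schmidt triangulated category with suspension functor $\Omega_\C^{-1}$, since $\C$ is quasi-Frobenius. The plan is to turn the defining short exact sequence of $L_\zeta$ into a distinguished triangle, tensor with $X$, identify the resulting connecting morphism with $\varphi_X(\zeta)$, and then read off both directions of the equivalence from a rotation.

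First, apply the biexact functor $-\otimes X$ to the sequence $0\to L_\zeta\to\Omega_\C^n(\unit)\xrightarrow{\hat\zeta}\unit\to 0$ defining $L_\zeta$. This yields a short exact sequence $0\to L_\zeta\otimes X\to\Omega_\C^n(\unit)\otimes X\to X\to 0$, and hence a distinguished triangle in the stable category. Since $\Omega_\C^n(\unit)\otimes X$ is stably isomorphic to $\Omega_\C^n(X)$, and dually $\Omega_\C^{-1}(L_\zeta\otimes X)$ is stably isomorphic to $\Omega_\C^{-1}(L_\zeta)\otimes X$ (both because tensoring a projective or injective object with anything remains projective or injective), the triangle may be rewritten as
\[
   L_\zeta\otimes X\to\Omega_\C^n(X)\xrightarrow{\alpha} X\to\Omega_\C^{-1}(L_\zeta)\otimes X.
\]
Chasing through the definition of $\varphi_X$ as $-\otimes X$ and the identification of $\zeta$ with $\hat\zeta$ (Lemma~\ref{lem:Ext-Hom} together with its evident generalization to non-simple second argument) shows that $\alpha$ corresponds to $\varphi_X(\zeta)$ under the standard isomorphism $\Ext_\C^n(X,X)\cong\underline{\Hom}(\Omega_\C^n(X),X)$.

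Rotating once produces
\[
   \Omega_\C^n(X)\xrightarrow{\varphi_X(\zeta)} X\xrightarrow{\beta}\Omega_\C^{-1}(L_\zeta)\otimes X\xrightarrow{\gamma}\Omega_\C^{n-1}(X).
\]
The forward direction is now immediate: if $\varphi_X(\zeta)=0$ in the stable category, then this distinguished triangle splits, so $\Omega_\C^{-1}(L_\zeta)\otimes X\cong X\oplus\Omega_\C^{n-1}(X)$ stably.

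For the converse, suppose $\Omega_\C^{-1}(L_\zeta)\otimes X\cong X\oplus\Omega_\C^{n-1}(X)$ stably. Applying $\underline{\Hom}(-,X)$ to the rotated triangle yields an exact sequence containing
\[
   \underline{\Hom}(\Omega_\C^{-1}(L_\zeta)\otimes X,X)\xrightarrow{\beta^*}\underline{\End}(X)\xrightarrow{-\circ\,\varphi_X(\zeta)}\underline{\Hom}(\Omega_\C^n(X),X).
\]
Since the second map sends $\id_X$ to $\varphi_X(\zeta)$, it suffices to show $\id_X\in\Ima\beta^*$, i.e., that $\beta$ admits a stable retraction. Extracting such a retraction from the mere abstract direct-sum decomposition is the main obstacle: it requires invoking Krull--Schmidt in the stable category to ensure that $\beta$ hits the $X$-summand of $\Omega_\C^{-1}(L_\zeta)\otimes X$ in a way compatible with the triangle, thereby promoting the abstract isomorphism to a genuine splitting.
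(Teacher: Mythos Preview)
Your approach is essentially the paper's, transported to the stable category: the paper builds the pushout object $K_\zeta$ fitting in $0\to\unit\to K_\zeta\to\Omega_\C^{n-1}(\unit)\to 0$ and observes $K_\zeta\cong\Omega_\C^{-1}(L_\zeta)\oplus P$, then tensors with $X$. Your rotated triangle is the image of this short exact sequence in the stable category, so the forward direction (triangle splits when $\varphi_X(\zeta)=0$) is correct and matches the paper.

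The converse, however, has a genuine gap that you yourself flag but do not close. You need: if the cone of $\varphi_X(\zeta)$ is \emph{abstractly} isomorphic to $X\oplus\Omega_\C^{n-1}(X)$, then the triangle actually splits. Saying ``invoke Krull--Schmidt'' is not an argument; Krull--Schmidt only tells you the indecomposable summands match, not that $\beta$ is a split monomorphism. If $X$ and $\Omega_\C^{n-1}(X)$ share indecomposable summands, there is no a priori reason the map $\beta$ should hit the ``correct'' copy. The paper resolves exactly this point in Lemma~\ref{lem:split}: given a short exact sequence $0\to X'\to Y'\to Z'\to 0$ with $Y'$ stably isomorphic to $X'\oplus Z'$, one applies $\Hom_\A(Z',-)$ and uses the alternating sum of $k$-dimensions (which is zero for a four-term exact sequence of finite-dimensional vector spaces) together with the Krull--Schmidt normalization $Y'\cong X'\oplus Z'\oplus P$ to force the cokernel of $\pi_*$ to vanish. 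That dimension count is the actual content; Krull--Schmidt alone does not supply it. To complete your proof you should either descend to the abelian category and cite Lemma~\ref{lem:split}, or reproduce an analogous finiteness argument for $\underline{\Hom}$ in the stable category.
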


\begin{proof}
Consider the minimal projective resolution
$$\cdots \to P_2 \to P_1 \to P_0 \to \unit \to 0$$
of the unit object, and represent the element $\zeta$ by an epimorphism $\hat{\zeta} \colon \Omega_{\C}^n( \unit ) \to \unit$. Since the category $\C$ is abelian, we may take the pushout of $\hat{\zeta}$ with the monomorphism $\Omega_{\C}^n( \unit ) \to P_{n-1}$, and obtain a commutative diagram
$$\xymatrix{
& 0 \ar[d] & 0 \ar[d] \\
& L_{\zeta} \ar[d] \ar@{=}[r] & L_{\zeta} \ar[d] \\
0 \ar[r] & \Omega_{\C}^n( \unit ) \ar[d] ^{\hat{\zeta}} \ar[r] & P_{n-1} \ar[d] \ar[r] & \Omega_{\C}^{n-1}( \unit ) \ar@{=}[d] \ar[r] & 0 \\
0 \ar[r] & \unit \ar[d] \ar[r] & K_{\zeta} \ar[d] \ar[r] & \Omega_{\C}^{n-1}( \unit ) \ar[r] & 0 \\
& 0 & 0 }$$
with exact rows and columns. The bottom row corresponds to the element $\zeta$ under the dimension shift isomorphism $\Ext_{\C}^n( \unit, \unit ) \simeq \Ext_{\C}^1( \Omega_{\C}^{n-1}( \unit ), \unit )$, and the exactness of the second column shows that the object $K_{\zeta}$ is isomorphic to $\Omega_{\C}^{-1}( L_{\zeta} ) \oplus P$ for some projective object $P$. The image $\varphi_X ( \zeta )$ of $\zeta$ in $\Ext_{\C}^*(X,X)$ is therefore represented by the short exact sequence
$$0 \to X \to \left ( \Omega_{\C}^{-1}( L_{\zeta} ) \otimes X \right ) \oplus \left ( P \otimes X \right ) \to \Omega_{\C}^{n-1}( \unit ) \otimes X \to 0$$
where $\Omega_{\C}^{n-1}( \unit ) \otimes X \simeq \Omega_{\C}^{n-1}( X ) \oplus Q$ for some projective object $Q$, and where we use again a dimension shift isomorphism $\Ext_{\C}^n(X,X ) \simeq \Ext_{\C}^1( \Omega_{\C}^{n-1}( X ),X )$. Now, the image $\varphi_X ( \zeta )$ is zero in $\Ext_{\C}^*(X,X)$ if and only if this short exact sequence splits, which by Lemma \ref{lem:split} happens if and only if $\left ( \Omega_{\C}^{-1}( L_{\zeta} ) \otimes X \right ) \oplus \left ( P \otimes X \right )$ is stably isomorphic to $X \oplus \Omega_{\C}^{n-1}( X ) \oplus Q$. As $P \otimes X$ and $Q$ are projective objects, this is equivalent to $\Omega_{\C}^{-1}( L_{\zeta} ) \otimes X$ being stably isomorphic to $X \oplus \Omega_{\C}^{n-1}( X )$.
\end{proof}

The final result in this section relates the objects $L_{\zeta_1}, L_{\zeta_2}$ and $L_{\zeta_1 \zeta_2}$ for two homogeneous elements $\zeta_1, \zeta_2 \in \Coh^* ( \C )$.

\begin{proposition}\label{prop:product}
Let $\C$ be a finite tensor category, and $\zeta_1, \zeta_2$ two homogeneous elements in $\Coh^* ( \C )$. Then there is a short exact sequence
$$0 \to \Omega_{\C}^{| \zeta_1 |}( L_{\zeta_2} ) \to L_{\zeta_1 \zeta_2} \oplus P \to L_{\zeta_1} \to 0$$
for some projective object $P$.
\end{proposition}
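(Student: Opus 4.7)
The plan is as follows. Write $n_i = |\zeta_i|$ and represent each $\zeta_i$ by a morphism $\hat{\zeta}_i : \Omega_{\C}^{n_i}(\unit) \to \unit$ with $L_{\zeta_i} = \Ker \hat{\zeta}_i$, as in~(\ref{eqn:ell-zeta}), together with a representative $\widehat{\zeta_1 \zeta_2} : \Omega_{\C}^{n_1+n_2}(\unit) \to \unit$ of the Yoneda product. The standard description of that product provides a morphism $\tilde{\zeta}_2 : \Omega_{\C}^{n_1+n_2}(\unit) \to \Omega_{\C}^{n_1}(\unit)$, obtained by lifting $\hat{\zeta}_2$ to a chain map between shifted projective resolutions and restricting to $n_1$-th syzygies, satisfying $\widehat{\zeta_1 \zeta_2} = \hat{\zeta}_1 \circ \tilde{\zeta}_2$. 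Degenerate cases in which some of these elements vanish can be treated separately, so I shall assume $\zeta_1,\zeta_2,\zeta_1\zeta_2$ are all nonzero in what follows.

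The first step is to produce a short exact sequence
\[
0 \to \Omega_{\C}^{n_1}(L_{\zeta_2}) \to \Omega_{\C}^{n_1+n_2}(\unit) \oplus P \xrightarrow{\phi} \Omega_{\C}^{n_1}(\unit) \to 0
\]
with $P$ projective and $\phi$ restricting to $\tilde{\zeta}_2$ on the summand $\Omega_{\C}^{n_1+n_2}(\unit)$. To do this I apply the horseshoe lemma to $0 \to L_{\zeta_2} \to \Omega_{\C}^{n_2}(\unit) \to \unit \to 0$, starting from minimal projective resolutions of $L_{\zeta_2}$ and of $\unit$; this produces a short exact sequence of $n_1$-th syzygies, and Schanuel's lemma (Lemma~\ref{lem:Schanuel}) lets me identify the middle syzygy object with $\Omega_{\C}^{n_1+n_2}(\unit) \oplus P$ for a suitable projective $P$. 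Tracking the quotient of resolutions through this identification shows that $\phi|_{\Omega_{\C}^{n_1+n_2}(\unit)}$ coincides with the chain-map lift $\tilde{\zeta}_2$ up to homotopy.

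Next, I apply the snake lemma to the commutative diagram
\[
\begin{array}{cccccc}
0 \to \Omega_{\C}^{n_1}(L_{\zeta_2}) & \to & \Omega_{\C}^{n_1+n_2}(\unit) \oplus P & \xrightarrow{\phi} & \Omega_{\C}^{n_1}(\unit) & \to 0 \\
\downarrow & & \downarrow \hat{\zeta}_1 \circ \phi & & \downarrow \hat{\zeta}_1 & \\
0 \to 0 & \to & \unit & = & \unit & \to 0
\end{array}
\]
whose left vertical is zero and whose middle and right verticals are epimorphisms. The six-term exact sequence collapses to
\[
0 \to \Omega_{\C}^{n_1}(L_{\zeta_2}) \to \Ker(\hat{\zeta}_1 \circ \phi) \to L_{\zeta_1} \to 0.
\]

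It then remains to identify $\Ker(\hat{\zeta}_1 \circ \phi)$ with $L_{\zeta_1\zeta_2} \oplus P$. Writing $\phi = (\tilde{\zeta}_2, \pi)$ with $\pi : P \to \Omega_{\C}^{n_1}(\unit)$ the second component, the composite becomes $\hat{\zeta}_1 \circ \phi = (\widehat{\zeta_1 \zeta_2}, \hat{\zeta}_1 \circ \pi)$. Projectivity of $P$ together with surjectivity of $\widehat{\zeta_1 \zeta_2}$ (its target $\unit$ is simple and the map is nonzero) lets me lift $-\hat{\zeta}_1 \circ \pi$ through $\widehat{\zeta_1 \zeta_2}$ to some $\sigma : P \to \Omega_{\C}^{n_1+n_2}(\unit)$. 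The upper-triangular automorphism $(x, p) \mapsto (x + \sigma(p), p)$ of $\Omega_{\C}^{n_1+n_2}(\unit) \oplus P$ then intertwines $(\widehat{\zeta_1 \zeta_2}, \hat{\zeta}_1 \circ \pi)$ with $(\widehat{\zeta_1 \zeta_2}, 0)$, whose kernel is manifestly $L_{\zeta_1 \zeta_2} \oplus P$. The main obstacle will be carrying out the horseshoe identification in step one, so that $\phi$ genuinely restricts to $\tilde{\zeta}_2$; once that is in place the snake lemma and the automorphism argument are formal. Alternatively, the whole argument can be viewed as the octahedral axiom in the stable category applied to the composable morphisms $\Omega^{n_1}(\hat{\zeta}_2)$ and $\hat{\zeta}_1$, translated back to $\C$ up to projective summands.
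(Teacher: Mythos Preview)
Your proposal is correct and is precisely the standard argument from \cite[Lemma~5.9.3]{Benson2} that the paper cites as its entire proof; you have simply unpacked the details of the horseshoe-plus-snake-lemma computation that Benson carries out for $kG$-modules. The one point you flag as delicate---that $\phi|_{\Omega_{\C}^{n_1+n_2}(\unit)}$ agrees with $\tilde{\zeta}_2$ only up to a map factoring through a projective---is harmless, since after composing with $\hat{\zeta}_1$ the resulting map $\Omega_{\C}^{n_1+n_2}(\unit)\to\unit$ is determined on the nose by its stable class (Lemma~\ref{lem:Ext-Hom} gives $\Hom_{\C}(\Omega_{\C}^{n_1+n_2}(\unit),\unit)\cong\Ext_{\C}^{n_1+n_2}(\unit,\unit)$), so the first component of $\hat{\zeta}_1\circ\phi$ really is $\widehat{\zeta_1\zeta_2}$ and your triangular-automorphism identification of the kernel goes through.
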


\begin{proof}
The proof from the group cohomology case carries over; see \cite[Lemma 5.9.3]{Benson2}.
\end{proof}


\section{Support varieties of indecomposable objects}\label{sec:indecomposable}

In this section, we prove that when the finiteness condition \textbf{Fg} holds, then the support variety of an indecomposable object is connected. We start with the following result, which allows us to, in a sense, reduce the complexity of an object.  

\begin{proposition}\label{prop:reducing}
If $\C$ is a finite tensor category satisfying \emph{\textbf{Fg}}, and $X$ is an object with $\dim \VC(X) \ge 1$, then there exists a short exact sequence
$$0 \to X \to K \to \Omega_{\C}^n(X) \to 0$$
for some $n \ge 0$, with $\dim \VC(K) = \dim \VC(X)-1$.
\end{proposition}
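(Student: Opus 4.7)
The plan is to construct the required short exact sequence by tensoring $X$ with a Carlson-type extension. I will choose a suitable homogeneous element $\zeta\in\Ho(\C)$ of positive degree $n$ and use the short exact sequence
$$0\to\unit\to K_\zeta\to\Omega_\C^{n-1}(\unit)\to 0$$
appearing in the proof of Proposition~\ref{prop:zero}, where $K_\zeta\cong\Omega_\C^{-1}(L_\zeta)\oplus P$ for some projective object $P$. The output of the argument will have $K$ stably isomorphic to $K_\zeta\ot X$.

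The first step is to choose $\zeta$ so that $\VC(X)\cap Z(\zeta)$ has dimension $d-1$, where $d=\dim\VC(X)$. Set $R=\Ho(\C)/I_\C(X)$; by assumption \textbf{Fg} this is a finitely generated graded-commutative $k$-algebra of Krull dimension $d\ge 1$, with $R_0=k$. Each minimal prime $\mathfrak{p}$ of $R$ with $\dim R/\mathfrak{p}=d$ satisfies $R_+\not\subseteq\mathfrak{p}$, since otherwise $R/\mathfrak{p}$ would be zero-dimensional. Graded prime avoidance then produces a homogeneous element $\zeta'\in R_+$ lying outside every such $\mathfrak{p}$, which lifts to a nonzero homogeneous $\zeta\in\Ho(\C)$ of positive degree $n$. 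A standard commutative-algebra argument, reducing to each integral domain $R/\mathfrak{p}$ (finitely generated over $k$, hence catenary) and applying Krull's principal ideal theorem, then gives $\dim R/(\zeta')=d-1$, i.e.\ $\dim(\VC(X)\cap Z(\zeta))=d-1$.

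Next, I would apply $-\ot X$ to the above sequence, using biexactness, to obtain
$$0\to X\to K_\zeta\ot X\to\Omega_\C^{n-1}(\unit)\ot X\to 0.$$
The right-hand term is a syzygy of $X$ in the (non-minimal) projective resolution obtained by tensoring a minimal resolution of $\unit$ with $X$, and is therefore stably isomorphic to $\Omega_\C^{n-1}(X)$ by Schanuel's Lemma. A splice-and-split manoeuvre, using that projective objects are injective so that surjections onto projective direct summands split, then converts this sequence into
$$0\to X\to K\to\Omega_\C^{n-1}(X)\to 0$$
with $K$ stably isomorphic to $K_\zeta\ot X$, and hence with $\VC(K)=\VC(K_\zeta\ot X)$ since projective summands do not change support varieties.

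Finally, I would compute this support variety. The decomposition $K_\zeta\cong\Omega_\C^{-1}(L_\zeta)\oplus P$, together with Proposition~\ref{prop:props}(iv) applied in both directions to $0\to L_\zeta\ot X\to Q\ot X\to\Omega_\C^{-1}(L_\zeta)\ot X\to 0$ (where $Q$ is the injective hull of $L_\zeta$, hence projective), yields $\VC(K_\zeta\ot X)=\VC(L_\zeta\ot X)$. Theorem~\ref{thm:ell-zeta} then gives $\VC(L_\zeta\ot X)=\VC(X)\cap Z(\zeta)$, which has dimension $d-1$ by the choice of $\zeta$. The main technical obstacle is the graded prime-avoidance step that ensures $\zeta$ cuts the dimension down by exactly one; once $\zeta$ is fixed, the remaining steps are essentially formal consequences of Theorem~\ref{thm:ell-zeta} and Proposition~\ref{prop:zero}.
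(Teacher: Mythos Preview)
Your proposal is correct and follows essentially the same route as the paper: tensor the pushout sequence $0\to\unit\to K_\zeta\to\Omega_\C^{n-1}(\unit)\to 0$ from Proposition~\ref{prop:zero} with $X$, identify $\VC(K_\zeta\ot X)=\VC(L_\zeta\ot X)=\VC(X)\cap Z(\zeta)$ via Theorem~\ref{thm:ell-zeta}, and split off the projective summand on the right. The only difference is in the commutative-algebra input used to produce the dimension-dropping element $\zeta$: the paper cites \cite[Lemma 2.5]{BIKO} to obtain a $\zeta$ that acts as an eventual nonzerodivisor on $\Ho(\C)/I_\C(X)$ and then invokes Hilbert--Serre and \cite[Proposition 11.3]{AtiyahMacdonald}, whereas you use graded prime avoidance together with Krull's principal ideal theorem in the catenary ring $\Ho(\C)/I_\C(X)$---both are standard and yield the same conclusion.
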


\begin{proof}
Consider the annihilator ideal $I = I_{\C}(X)$ of $\Ext_{\C}^*(X,X)$ in $\Ho ( \C )$. By definition, the dimension of $\VC(X)$ is the Krull dimension of $\Ho ( \C ) / I$, which by \cite[Theorem 5.4.6]{Benson2} equals its rate of growth $\gamma \left ( \Ho ( \C ) / I \right )$ as a graded $k$-vector space. By assumption, this is a positive integer.

\sloppy By \cite[Lemma 2.5]{BIKO} there exists a homogeneous element $\zeta \in \Ho ( \C )$, of positive degree, say $n$, with the property that multiplication
$$\left ( \Ho ( \C ) / I \right )_i \xrightarrow{\cdot \zeta} \left ( \Ho ( \C ) / I \right )_{i+n}$$
is injective for $i \gg 0$. Choose an $n_0$ such that these multiplication maps are injective for $i \ge n_0$, and consider the exact sequence
$$0 \to \bigoplus_{i=n_0}^{\infty} \left ( \Ho ( \C ) / I \right )_i \xrightarrow{\cdot \zeta} \bigoplus_{i=n_0}^{\infty} \left ( \Ho ( \C ) / I \right )_i \to \bigoplus_{i=n_0}^{\infty} \left ( \Ho ( \C ) / ( I, \zeta ) \right )_i \to 0$$
of graded $k$-vector spaces. By the Hilbert-Serre Theorem, the Poincar{\'e} series of $\oplus_{i=n_0}^{\infty} \left ( \Ho ( \C ) / I \right )_i$ is a rational function of the form $f(t) / \prod (1 - t^{m_i})$; see \cite[Proposition 5.3.1]{Benson2}. Moreover, by \cite[Proposition 5.3.2]{Benson2}, the rate of growth of $\oplus_{i=n_0}^{\infty} \left ( \Ho ( \C ) / I \right )_i$ is the same as the order of the pole at $t=1$ of this rational function. Similarly, the Poincar{\'e} series of $\oplus_{i=n_0}^{\infty} \left ( \Ho ( \C ) / ( I, \zeta ) \right )_i$ is of the form $g(t) / \prod (1 - t^{m_i})$, and its rate of growth is the order of the pole at $t=1$ of this rational function. 

Since $\zeta$ is regular on $\oplus_{i=n_0}^{\infty} \left ( \Ho ( \C ) / I \right )_i$, it follows from \cite[Proposition 11.3]{AtiyahMacdonald} that the rate of growth of $\oplus_{i=n_0}^{\infty} \left ( \Ho ( \C ) / (I, \zeta ) \right )_i$ is one less than that of $\oplus_{i=n_0}^{\infty} \left ( \Ho ( \C ) / I \right )_i$. The rate of growth of a graded vector space does not change when we discard finitely many homogeneous subspaces, hence $\gamma \left ( \Ho ( \C ) / ( I, \zeta ) \right ) = \gamma \left ( \Ho ( \C ) / I \right ) -1$. Now consider the commutative diagram from the proof of Proposition \ref{prop:zero}. Applying $- \otimes X$ to this diagram, we obtain a short exact sequence
$$0 \to X \to K_{\zeta} \otimes X \to \Omega_{\C}^{n-1}( \unit ) \otimes X \to 0$$
with $\VC ( K_{\zeta} \otimes X ) = \VC ( L_{\zeta} \otimes X )$ in light of the second column and the fact that $P_{n-1} \otimes X$ is a projective object; see Proposition~\ref{prop:props}(iv). Therefore, by Theorem~\ref{thm:ell-zeta}, there are equalities
$$\VC ( K_{\zeta} \otimes X ) = \VC ( X ) \cap Z ( \zeta ) = Z ( I ) \cap Z ( \zeta ) = Z ( I, \zeta  ) . $$
The dimension of $\VC ( K_{\zeta} \otimes X )$ is then the Krull dimension of $\Ho ( \C ) / ( I, \zeta )$, which is one less than that of $\Ho ( \C ) / I$ by the above. This shows that $\dim \VC ( K_{\zeta} \otimes X ) = \dim \VC ( X ) -1$. Finally, note that in the short exact sequence above, the object $\Omega_{\C}^{n-1}( \unit ) \otimes X$ is isomorphic to $\Omega_{\C}^{n-1}( X ) \oplus P$ for some projective object $P$. Splitting this $P$ off from the sequence, we obtain an object $K$ and a short exact sequence
$$0 \to X \to K \to \Omega_{\C}^{n-1}(X) \to 0$$
with $\VC(K) = V_{\C} ( K_{\zeta} \otimes X )$.
\end{proof}

In the following result, we characterize when $\VC(X,Y)$ is trivial, that is, zero-dimensional.
 
\begin{proposition}\label{prop:vanishing}
If $\C$ is a finite tensor category satisfying \emph{\textbf{Fg}}, then the following are equivalent for all objects $X,Y$:
\begin{itemize}
\item[(i)] $\dim \VC(X,Y) =0$;
\item[(ii)] $\Ext_{\C}^n(X,Y) =0$ for $n \gg 0$;
\item[(iii)] $\Ext_{\C}^n(X,Y) =0$ for $n \ge 1$.
\end{itemize}
\end{proposition}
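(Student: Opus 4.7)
Of the three equivalences, (iii) $\Rightarrow$ (ii) is immediate. For (ii) $\Rightarrow$ (i): since $\C$ is locally finite, each $\Ext_{\C}^n(X,Y)$ is a finite-dimensional $k$-vector space, so hypothesis (ii) forces the graded module $\Ext_{\C}^*(X,Y)$ to be finite-dimensional over $k$. Any homogeneous element $\zeta$ of positive degree in $\Ho(\C)$ then has a sufficiently large power $\zeta^m$ whose degree exceeds the top nonvanishing degree of $\Ext_{\C}^*(X,Y)$, so $\zeta^m$ lies in $I_{\C}(X,Y)$. This shows that the radical of $I_{\C}(X,Y)$ contains $\Ho(\C)_+$; since $\m_0 = \Ho(\C)_+$ is the only maximal ideal of $\Ho(\C)$ containing itself, we conclude $\VC(X,Y) = \{\m_0\}$ has dimension zero.

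The substantive implication is (i) $\Rightarrow$ (iii). I plan to reduce to the case $X = \unit$ using rigidity: the adjunction from the left dual produces a $\Ho(\C)$-equivariant isomorphism $\Ext_{\C}^n(X,Y) \cong \Ext_{\C}^n(\unit, {}^{*}X \otimes Y)$, so setting $Z = {}^{*}X \otimes Y$ transfers the hypothesis to $\dim \VC(\unit,Z) = 0$ and the goal to $\Ext_{\C}^n(\unit,Z) = 0$ for all $n \ge 1$. I then induct on the Krull dimension $d = \dim \VC(Z)$, which is finite under \textbf{Fg}. The base case $d=0$ is immediate from Corollary~\ref{cor:zero}, which makes $Z$ projective and hence $\Ext_{\C}^n(\unit,Z) = 0$ for $n \ge 1$.

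For the inductive step $d \ge 1$, apply Proposition~\ref{prop:reducing}, choosing the regular element $\zeta$ to have degree at least three (by passing to a high power if necessary), to obtain a short exact sequence $0 \to Z \to K \to \Omega_{\C}^n(Z) \to 0$ with $n \ge 2$ and $\dim \VC(K) = d-1$. Applying $\Hom_{\C}(\unit,-)$, the dimension shift $\Ext_{\C}^i(\unit, \Omega_{\C}^n(Z)) \cong \Ext_{\C}^{i+n}(\unit,Z)$ for $i \ge 1$ together with the finite-dimensionality of $\Ext_{\C}^*(\unit,Z)$ coming from the already-proved implication (i) $\Rightarrow$ (ii) forces $\Ext_{\C}^*(\unit,K)$ to be finite-dimensional, so $\dim \VC(\unit,K) = 0$. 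The inductive hypothesis then yields $\Ext_{\C}^i(\unit,K) = 0$ for every $i \ge 1$; substituting this into the long exact sequence produces isomorphisms $\Ext_{\C}^{j+n-1}(\unit,Z) \cong \Ext_{\C}^j(\unit,Z)$ for $j \ge 2$, and since the left side vanishes for large $j$, iteration kills every $\Ext_{\C}^j(\unit,Z)$ with $j \ge 2$.

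The main obstacle is the remaining degree: the long exact sequence only produces a surjection $\Hom_{\C}(\unit, \Omega_{\C}^n(Z)) \twoheadrightarrow \Ext_{\C}^1(\unit,Z)$, whose kernel need not exhaust the source. I would close this gap by running the symmetric version of the inductive step, using that $\C$ is quasi-Frobenius to produce via an injective (equivalently, projective) coresolution a short exact sequence $0 \to \Omega_{\C}^{-m}(Z) \to K' \to Z \to 0$ with $\dim \VC(K') < \dim \VC(Z)$; the long exact sequence attached to this second short exact sequence handles the degree-one term and completes the cycle of implications.
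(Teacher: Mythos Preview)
Your dimension-shift formula for the second variable is wrong. From $0 \to \Omega_{\C}(Z) \to P \to Z \to 0$ with $P$ projective and hence injective, the long exact sequence in $\Ext_{\C}^*(\unit,-)$ gives $\Ext_{\C}^{j}(\unit, Z) \cong \Ext_{\C}^{j+1}(\unit, \Omega_{\C}(Z))$ for $j \ge 1$, and by iteration $\Ext_{\C}^i(\unit, \Omega_{\C}^n(Z)) \cong \Ext_{\C}^{i-n}(\unit, Z)$ for $i \ge n+1$ --- the opposite direction to what you claim. (The formula $\Ext^i(\Omega^n X, Y) \cong \Ext^{i+n}(X,Y)$ holds only in the \emph{first} variable.) Fortunately the corrected shift makes your argument simpler rather than breaking it: once the inductive hypothesis gives $\Ext_{\C}^i(\unit, K) = 0$ for $i \ge 1$, the long exact sequence yields $\Ext_{\C}^i(\unit, \Omega_{\C}^n(Z)) \cong \Ext_{\C}^{i+1}(\unit, Z)$ for $i \ge 1$, and combining with the corrected shift gives $\Ext_{\C}^j(\unit, Z) \cong \Ext_{\C}^{j+n+1}(\unit, Z)$ for all $j \ge 1$, which kills every positive degree at once --- so the separate degree-one patch via a coresolution is unnecessary. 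You also silently invoke (i) $\Rightarrow$ (ii), which you never proved; it needs \textbf{Fg} to conclude that $\Ext_{\C}^*(X,Y)$ is finitely generated over the zero-Krull-dimensional ring $\Ho(\C)/I_{\C}(X,Y)$ and hence eventually vanishes.

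For comparison, the paper sidesteps both the duality reduction and the second-variable shift: it inducts directly on $\dim \VC(X)$, applies $\Hom_{\C}(-,Y)$ to the sequence $0 \to X \to K \to \Omega_{\C}^t(X) \to 0$ from Proposition~\ref{prop:reducing}, and uses the first-variable shift $\Ext_{\C}^i(\Omega_{\C}^t(X),Y) \cong \Ext_{\C}^{i+t}(X,Y)$, which already points the right way and gives $\Ext_{\C}^n(X,Y) \cong \Ext_{\C}^{n+t+1}(X,Y)$ for every $n \ge 1$ in one step. This also avoids having to verify that the adjunction isomorphism $\Ext_{\C}^*(X,Y) \cong \Ext_{\C}^*(\unit, {}^{*}X \otimes Y)$ is $\Ho(\C)$-equivariant, which is not entirely automatic with the one-sided convention for $\varphi_X$ adopted here (cf.\ Remark~\ref{rem:one-sided}).
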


\begin{proof}
If $\Ext_{\C}^n(X,Y) =0$ for $n \gg 0$, then for large $i$ the homogeneous subspace $I_{\C}(X,Y)_i$ of $I_{\C}(X,Y)$ equals $\left ( \Ho ( \C ) \right )_i$. Then $\left ( \Ho ( \C ) / I_{\C}(X,Y) \right )_i =0$ for $i \gg 0$, hence $\dim \VC(X,Y) = \gamma \left ( \Ho ( \C ) / I_{\C}(X,Y) \right ) =0$. Conversely, if the rate of growth of $\Ho ( \C ) / I_{\C}(X,Y)$ is zero, then $\left ( \Ho ( \C ) / I_{\C}(X,Y) \right )_i = 0$ for $i \gg 0$. As $\Ext_{\C}^*(X,Y)$ is a finitely generated graded module over $\Ho ( \C ) / I_{\C}(X,Y)$, we conclude that $\Ext_{\C}^n(X,Y) =0$ for $n \gg 0$. This proves the equivalence of (i) and (ii).

\sloppy We now show by induction on the dimension of $\VC(X)$ that (ii) implies (iii). If $\dim \VC(X) =0$, then $X$ is a projective object by Corollary~\ref{cor:zero}, and so trivially $\Ext_{\C}^n(X,Y) =0$ for $n \ge 1$. If the dimension of $\VC(X)$ is nonzero, then choose, by Proposition \ref{prop:reducing}, a short exact sequence
$$0 \to X \to K \to \Omega_{\C}^t(X) \to 0$$
for some $t \ge 0$, with $\dim \VC(K) = \dim \VC(X)-1$. We obtain from this sequence a long exact sequence
$$\resizebox{.98\hsize}{!}{$\Ext_{\C}^{1+t}(X,Y) \to \Ext_{\C}^1(K,Y) \to \Ext_{\C}^1(X,Y) \to \Ext_{\C}^{2+t}(X,Y) \to \Ext_{\C}^2(K,Y) \to \cdots$}$$
in cohomology, where we have used dimension shift to replace $\Ext_{\C}^i( \Omega_{\C}^t(X),Y)$ by $\Ext_{\C}^{i+t}(X,Y)$. By assumption, the cohomology groups $\Ext_{\C}^n(X,Y)$ vanish for $n \gg 0$, and so from the long exact sequence we see that the same is true for the cohomology groups $\Ext_{\C}^n(K,Y)$. But then by induction $\Ext_{\C}^n(K,Y) =0$ for $n \ge 1$, implying that $\Ext_{\C}^n(X,Y)$ and $\Ext_{\C}^{n+t+1}(X,Y)$ are isomorphic for all $n \ge 1$. Since $\Ext_{\C}^n(X,Y)=0$ for $n \gg 0$, we conclude that $\Ext_{\C}^n(X,Y) =0$ for $n \ge 1$.
\end{proof}

We are now ready to prove the main result in this section. It shows that if the support variety of an object can be written as the union of two subvarieties having trivial intersection, then the object decomposes accordingly into a direct sum. The proof is an adaption of Benson's proof of \cite[Theorem 5.12.1]{Benson2}, based on Carlson's original proof from \cite{Carlson}.

\begin{theorem}\label{thm:main}
Let $\C$ be a finite tensor category satisfying \emph{\textbf{Fg}}, and $X$ an object in $\C$. Suppose that $\VC(X) = V_1 \cup V_2$, where $V_1$ and $V_2$ are conical subvarieties of $\VC(X)$ with $V_1 \cap V_2 = \{ \m_0 \}$. Then $X \simeq X_1 \oplus X_2$ for some objects $X_1$ and $X_2$ with $\VC(X_i) = V_i$.
\end{theorem}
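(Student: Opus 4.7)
Plan: I adapt Carlson's classical argument~\cite{Carlson}, as presented in Benson~\cite[Thm.~5.12.1]{Benson2}, to the finite tensor category setting using the $L_\zeta$ objects of Section~\ref{sec:carlson}. I first reduce to the indecomposable case: decomposing $X = \bigoplus_k X_k$ into indecomposables via Krull-Schmidt, each $\VC(X_k) \subseteq V_1 \cup V_2$; if every indecomposable summand has variety contained in $V_1$ or in $V_2$, grouping yields $X = X_1 \oplus X_2$ with $\VC(X_i) \subseteq V_i$, and then $\VC(X_1) \cup \VC(X_2) = V_1 \cup V_2$ together with $V_1 \cap V_2 = \{\m_0\}$ forces $\VC(X_i) = V_i$ exactly. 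So it suffices to show that an indecomposable $X$ with $\VC(X) = V_1 \cup V_2$ and $V_1 \cap V_2 = \{\m_0\}$ satisfies $\VC(X) \subseteq V_1$ or $\VC(X) \subseteq V_2$.

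For the indecomposable case I would argue by contradiction and induct on $d = \dim V_1 + \dim V_2$, assuming both $V_i \neq \{\m_0\}$. Since $V_1 \cap V_2 = \{\m_0\}$ gives $\sqrt{I(V_1) + I(V_2)} = \m_0$, graded prime avoidance produces homogeneous elements $\zeta_i \in I(V_i) \subseteq \Ho(\C)$ of a common positive degree $n$ such that $\zeta_i$ avoids every minimal prime of $I(V_{3-i})$ defining a top-dimensional component of $V_{3-i}$; hence $V_{3-i} \cap Z(\zeta_i)$ has dimension strictly less than $\dim V_{3-i}$. Because $I(V_1)I(V_2) \subseteq I(V_1) \cap I(V_2) = \sqrt{I_\C(X)}$, after replacing each $\zeta_i$ with a suitable power I may also arrange $\zeta_1 \zeta_2 \in I_\C(X)$. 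Proposition~\ref{prop:zero} then yields the stable isomorphism $\Omega_\C^{-1}(L_{\zeta_1 \zeta_2}) \otimes X \cong X \oplus \Omega_\C^{2n-1}(X)$, and Proposition~\ref{prop:product}, tensored with $X$ and shifted by $\Omega_\C^{-1}$, produces (stably) a short exact sequence with middle term $X \oplus \Omega_\C^{2n-1}(X)$ and outer terms $\Omega_\C^{n-1}(L_{\zeta_2}) \otimes X$ and $\Omega_\C^{-1}(L_{\zeta_1}) \otimes X$, whose support varieties by Theorem~\ref{thm:ell-zeta} are $V_2 \cup (V_1 \cap Z(\zeta_2))$ and $V_1 \cup (V_2 \cap Z(\zeta_1))$ respectively.

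Each outer term has support variety equal to a union of two conical subvarieties meeting in $V_1 \cap V_2 = \{\m_0\}$ and with sum of dimensions strictly smaller than $d$, so the inductive hypothesis applies and decomposes each outer term into a piece supported on $V_i$ and a piece supported on $V_{3-i} \cap Z(\zeta_i)$. Splitting the stable short exact sequence (via Proposition~\ref{prop:vanishing}, applied after pre-separating the outer terms into their $V_1$- and $V_2$-supported pieces so that the relevant cross Ext groups vanish) and regrouping, I obtain $X \oplus \Omega_\C^{2n-1}(X) \cong Y_1 \oplus Y_2$ stably with $\VC(Y_i) = V_i$. Since $X$ is indecomposable and non-projective, Krull-Schmidt in the stable category (which follows from Krull-Schmidt in $\C$ together with Lemma~\ref{lem:split}) forces $X$ to be stably isomorphic to an indecomposable summand of $Y_1$ or $Y_2$, hence $\VC(X) \subseteq V_1$ or $\VC(X) \subseteq V_2$, contradicting the assumption that both $V_i \neq \{\m_0\}$.

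The main technical obstacle I anticipate lies in splitting the stable short exact sequence produced in the second step: the support varieties of its two outer terms intersect in $(V_2 \cap Z(\zeta_1)) \cup (V_1 \cap Z(\zeta_2))$, which is generally nontrivial, so Proposition~\ref{prop:vanishing} does not directly give the needed Ext-vanishing. The delicate point is to iterate the construction — splitting the outer terms first via the inductive hypothesis and then pairing off their $V_{3-i}$-supported ``spurious'' pieces so that the remaining pieces have disjoint varieties and the extension splits — combined with careful Krull-Schmidt bookkeeping in the stable category to transfer a stable decomposition of $X \oplus \Omega_\C^{2n-1}(X)$ into a conclusion about $X$ itself.
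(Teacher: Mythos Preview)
Your proposal is correct and is essentially the paper's own argument: Carlson's proof adapted via the $L_\zeta$ objects, inducting on $\dim V_1+\dim V_2$, using Propositions~\ref{prop:zero} and~\ref{prop:product} to build the key short exact sequence, decomposing its outer terms by induction, and splitting via Proposition~\ref{prop:vanishing}.

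Two small simplifications the paper makes relative to your outline. First, it inducts directly on the full statement for arbitrary $X$, skipping your preliminary reduction to the indecomposable case; this is cleaner because the inductive hypothesis must be applied to the outer terms $\Omega^r(L_{\zeta_2})\otimes X$ and $L_{\zeta_1}\otimes X$, which are not indecomposable, so your structure forces you to pass back through the reduction at every inductive step anyway. Second, it avoids your $\Omega^{-1}$-shift and stable-category bookkeeping by working with the honest short exact sequence in $\C$ whose middle term is $\Omega^1(X)\oplus\Omega^n(X)\oplus Q$; the decomposition of $X$ is then read off from that of its syzygies via Krull--Schmidt in $\C$. Finally, the ``main technical obstacle'' you flag is milder than you suggest: once the outer terms are decomposed by induction into pieces with varieties $V_1',V_2$ and $V_1,V_2'$ respectively (where $V_i'\subseteq V_i$), the two \emph{off-diagonal} intersections $V_1\cap V_2$ and $V_1'\cap V_2'$ are both $\{\m_0\}$, so Proposition~\ref{prop:vanishing} kills the corresponding $\Ext^1$ groups directly and the sequence splits as a sum of two---no further iteration or delicate pairing is required.
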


\begin{proof}
The proof is by induction on the sum $\dim V_1 + \dim V_2$. If either $\dim V_1$ or $\dim V_2$ is zero, then we just take the corresponding $X_i$ to be the zero object, and the other to be $X$. We may therefore suppose that both $\dim V_1$ and $\dim V_2$ are nonzero, so that there exist proper homogeneous ideals $I_1$ and $I_2$ of $\Ho ( \C )$ with $V_i = Z ( I_i )$, and such that the Krull dimension of $\Ho ( \C ) / I_i$ is nonzero.

Choose a homogeneous element $\zeta \in \Ho ( \C )$, of positive degree, with the property that the Krull dimension of $\Ho ( \C ) / ( I_2, \zeta )$ is one less than that of $\Ho ( \C ) / I_2$; in the proof of Proposition \ref{prop:reducing} we showed that such an element exists. By assumption, there are equalities
$$Z ( I_1 + I_2 ) = Z ( I_1 ) \cap Z ( I_2 ) = V_1 \cap V_2 = \{ \m_0 \}$$
and so the radical of $ I_1 + I_2$ must equal $\m_0$. Therefore $\zeta^t \in  I_1 + I_2$ for some $t$, giving $\zeta^t = \zeta_1 + \theta$ for some homogeneous elements $\zeta_1 \in I_1$ and $\theta \in I_2$. The Krull dimensions of $\Ho ( \C ) / ( I_2, \zeta )$ and $\Ho ( \C ) / (I_2, \zeta^t )$ are clearly the same, hence
\begin{eqnarray*}
\dim \left ( \Ho ( \C ) / ( I_2, \zeta_1 ) \right ) & = & \dim \left ( \Ho ( \C ) / ( I_2, \zeta_1 + \theta ) \right ) \\
& = & \dim \left ( \Ho ( \C ) / ( I_2, \zeta^t ) \right ) \\
& = & \dim \left ( \Ho ( \C ) / I_2 \right ) - 1 .
\end{eqnarray*}
Similarly, we can find a homogeneous element $\zeta_2 \in I_2$, of positive degree, with the property that the Krull dimension of $\Ho ( \C ) / ( I_1, \zeta_2 )$ is one less than that of $\Ho ( \C ) / I_1$.  

Since $\zeta_i \in I_i$, there is an inclusion $V_i \subseteq Z ( \zeta_i )$. This gives
$$Z ( I_{\C}(X,X) ) = \VC(X) = V_1 \cup V_2 \subseteq Z ( \zeta_1 ) \cup Z ( \zeta_2 ) = Z ( \zeta_1 \zeta_2 )$$
and so $\zeta_1 \zeta_2$ belongs to $\sqrt{I_{\C}(X,X)}$. Again, the Krull dimensions of $\Ho ( \C ) / ( I_2, \zeta_1 )$ and $\Ho ( \C ) / ( I_1, \zeta_2 )$ remain the same when we replace $\zeta_1$ and $\zeta_2$ by powers, and so we may assume that $\zeta_1 \zeta_2 \in I_{\C}(X,X)$. Then by Proposition \ref{prop:zero}, the objects $\Omega_{\C}^{-1} ( L_{\zeta_1 \zeta_2} ) \otimes X$ and $X \oplus \Omega_{\C}^{n-1}(X)$ are stably isomorphic, where $n = | \zeta_1 \zeta_2 |$. Note that $\Omega_{\C}^{-1} ( L_{\zeta_1 \zeta_2} ) \otimes X$ is stably isomorphic to $\Omega_{\C}^{-1} ( L_{\zeta_1 \zeta_2} \otimes X )$, and so when we apply $\Omega_{\C}^1$, we see that $L_{\zeta_1 \zeta_2} \otimes X$ is stably isomorphic to $\Omega_{\C}^1(X) \oplus \Omega_{\C}^{n}(X)$. Now apply $- \otimes X$ to the short exact sequence in Proposition \ref{prop:product}. Using what we have just seen, we obtain a short exact sequence
$$0 \to \Omega_{\C}^{r}( L_{\zeta_2} ) \otimes X \to \Omega_{\C}^1(X) \oplus \Omega_{\C}^{n}(X) \oplus Q \to L_{\zeta_1} \otimes X \to 0$$
where $r$ is the degree of $\zeta_1$, and $Q$ is a projective object. 

Consider the end terms of this short exact sequence. The object $\Omega_{\C}^{r}( L_{\zeta_2} ) \otimes X$ is stably isomorphic to $\Omega_{\C}^{r}( L_{\zeta_2} \otimes X)$, and support varieties are invariant under syzygies. Therefore, since $V_i \subseteq Z( \zeta_i )$, we see from Theorem~\ref{thm:ell-zeta} that
$$\VC( \Omega_{\C}^{r}( L_{\zeta_2} ) \otimes X ) = Z( \zeta_2 ) \cap \VC(X) = Z( \zeta_2 ) \cap (V_1 \cup V_2) = ( Z( \zeta_2 ) \cap V_1 ) \cup V_2$$
and
$$\VC( L_{\zeta_1} \otimes X)  = Z( \zeta_1 ) \cap \VC(X) = Z( \zeta_1 ) \cap (V_1 \cup V_2) = V_1 \cup ( Z( \zeta_1 ) \cap V_2 ) . $$
Let us denote $Z( \zeta_2  ) \cap V_1$ by $V_1'$, and $Z( \zeta_1 ) \cap V_2$ by $V_2'$. Note that $Z( \zeta_2 ) \cap V_1 = Z ( \zeta_2 ) \cap Z ( I_1 ) = Z ( I_1, \zeta_2 )$, hence the dimension of the variety $V_1'$, that is, the Krull dimension of $\Ho ( \C ) / ( I_1, \zeta_2 )$, equals $\dim V_1 -1$. Similarly, $\dim V_2' = \dim V_2 -1$. To sum up: the support varieties of the two objects $ \Omega_{\C}^{r}( L_{\zeta_2} ) \otimes X$ and $L_{\zeta_1} \otimes X$ decompose as
$$\VC( \Omega_{\C}^{r}( L_{\zeta_2} ) \otimes X ) = V_1' \cup V_2$$
and 
$$\VC( L_{\zeta_1} \otimes X) = V_1 \cup V_2'$$
with both the sums $\dim V_1' + \dim V_2$ and $\dim V_1 + \dim V_2'$ equal to $\dim V_1 + \dim V_2 -1$. Moreover, from the construction of $V_1'$ and $V_2'$, it is clear that $V_1' \cap V_2 = \{ \m_0 \}$ and $V_1 \cap V_2' = \{ \m_0 \}$.

By induction, we can decompose the objects into direct sums $\Omega_{\C}^{r}( L_{\zeta_2} ) \otimes X \simeq Y_1 \oplus Y_2$ and $L_{\zeta_1} \otimes X \simeq Z_1 \oplus Z_2$, with $\VC(Y_1) = V_1'$ and $\VC(Y_2) = V_2$, and with $\VC(Z_1) = V_1$ and $\VC(Z_2) = V_2'$. Both the intersections $\VC(Y_1) \cap \VC(Z_2)$ and $\VC(Y_2) \cap \VC(Z_1)$ equal $\{ \m_0 \}$, and so it follows from Proposition~\ref{prop:props}(ii) and Proposition~\ref{prop:vanishing} that $\Ext_{\C}^1(Y_1,Z_2) =0$ and $\Ext_{\C}^1(Y_2,Z_1) =0$. Consequently, the short exact sequence above is isomorphic to the direct sum of two short exact sequences
$$0 \to Y_1 \to X_1' \to Z_1 \to 0$$
$$0 \to Y_2 \to X_2' \to Z_2 \to 0$$
for some objects $X_1'$ and $X_2'$. In particular, the object $\Omega_{\C}^1(X) \oplus \Omega_{\C}^{n}(X) \oplus Q$ is isomorphic to $X_1' \oplus X_2'$. Applying Proposition~\ref{prop:props} to these two short exact sequences, we see that $\VC(X_1') \subseteq V_1$ and $\VC(X_2') \subseteq V_2$. The Krull-Schmidt Theorem, the fact that support varieties are invariant under syzygies, and the fact that $\VC(X_1') \cap \VC(X_2') = \{ \m_0 \}$ now imply that the object $X$ must decompose as $X \simeq X_1 \oplus X_2$, with $\VC(X_i) = \VC(X_i') \subseteq V_i$. But $\VC(X) = V_1 \cup V_2$, and so $\VC(X_i)$ must equal $V_i$ for each $i$. This concludes the proof.
\end{proof}

By removing the origin, i.e.\ the unique homogeneous maximal ideal $\m_0$, the support varieties become projective varieties. From the theorem it is then clear that the projective support variety of an indecomposable object is connected.

\begin{corollary}\label{cor:connected}
In a finite tensor category satisfying \emph{\textbf{Fg}}, the projective support variety of an indecomposable object is connected.
\end{corollary}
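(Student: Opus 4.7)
The plan is to deduce the corollary from Theorem~\ref{thm:main} by a contrapositive argument: if the projective support variety of $X$ were disconnected, then $X$ would split as a direct sum and hence be decomposable. The main translation required is between disconnectedness of the projective variety $\VC(X)\setminus\{\m_0\}$ and a decomposition of the affine variety $\VC(X)$ into two conical subvarieties meeting only at the origin $\m_0$.

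More concretely, suppose for contradiction that $X$ is indecomposable but the projective variety $\VC(X)\setminus\{\m_0\}$ is disconnected. Then it decomposes as a disjoint union $U_1\sqcup U_2$ of two nonempty closed subsets in the Zariski topology of the projective variety. Taking the preimages under the quotient map $\VC(X)\setminus\{\m_0\}\to\VC(X)\setminus\{\m_0\}$, or rather taking closures in the affine cone, we obtain two conical subvarieties $V_1,V_2\subseteq\VC(X)$, each containing $\m_0$, with $V_1\cup V_2=\VC(X)$ and $V_1\cap V_2=\{\m_0\}$. (The fact that the closures in the affine cone are conical follows because the defining ideals of $U_1$ and $U_2$ are homogeneous; and the intersection is $\{\m_0\}$ precisely because $U_1$ and $U_2$ were disjoint as projective subvarieties.)

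Now apply Theorem~\ref{thm:main} to this decomposition $\VC(X)=V_1\cup V_2$ with $V_1\cap V_2=\{\m_0\}$. The theorem yields objects $X_1$ and $X_2$ with $X\simeq X_1\oplus X_2$ and $\VC(X_i)=V_i$. Since both $V_1$ and $V_2$ strictly contain $\{\m_0\}$ (as $U_1$ and $U_2$ were assumed nonempty), neither $X_1$ nor $X_2$ is the zero object; in fact by Corollary~\ref{cor:zero} neither is even projective, so in particular both are nonzero. This contradicts the indecomposability of $X$, completing the proof.

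The only step that is not completely routine is the topological translation in the second paragraph, i.e.\ verifying that a disconnection of the projective variety lifts to a decomposition of the affine cone into two conical subvarieties intersecting only at $\m_0$. This is standard homogeneous-ideal bookkeeping (a closed subset of $\operatorname{Proj}$ corresponds to a homogeneous radical ideal, and two such subsets are disjoint in $\operatorname{Proj}$ iff the sum of the corresponding ideals has radical equal to the irrelevant ideal $\m_0$), so no new ideas are needed; once this translation is in hand, Theorem~\ref{thm:main} does all the work.
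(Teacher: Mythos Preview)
Your argument is correct and is exactly the approach the paper takes: the paper simply asserts that the corollary is clear from Theorem~\ref{thm:main} after removing the origin $\m_0$ to pass to the projective variety, and you have spelled out the contrapositive with the standard translation between a disconnection of the projective variety and a decomposition $\VC(X)=V_1\cup V_2$ with $V_1\cap V_2=\{\m_0\}$. The appeal to Corollary~\ref{cor:zero} is not needed (it suffices that $\VC(0)=\{\m_0\}$ while each $V_i$ is strictly larger), but it is harmless.
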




\end{document}